\newtheorem{thm}{Theorem}[section]
\newtheorem{cor}[thm]{Corollary}
\newtheorem{lem}[thm]{Lemma}
\newtheorem{defn}[thm]{Definition}
\newtheorem{rem}[thm]{Remark}
\newtheorem{exam}[thm]{Example}
\theoremstyle{definition}
\def\P{\mathcal{P}}
\def\T{\mathcal{T}}
\def\h{\hat{\mathcal{T}}}
\def\s{\Sigma}
\def\bs{{\bf\s}}
\begin{document}
	\setcounter{page}{1}
	\title{Methods of generating soft topologies and soft separation axioms}
	\author{Zanyar A. Ameen$^{1*}$, Baravan A. Asaad$^{2,3}$, and Ramadhan A Mohammed$^4$
		\\
		$^1$Department of Mathematics, College of Science, University of Duhok,\\ Duhok 42001, Iraq; zanyar@uod.ac\\
		$^2$Deptartment of Mathematics, Faculty of Science, University of Zakho,\\ Duhok 42001, Iraq\\
		$^3$Department of Computer Science, College of Science, Cihan University-Duhok, Iraq\\
		 baravan.asaad@uoz.edu.krd\\
		$^4$Department of Mathematics, College of Basic Education, University of Duhok,\\ Duhok 42001, Iraq; ramadhan.hajani@uod.ac\\
}
	\date{}
	\maketitle
	
\begin{abstract}
The paper develops a novel analysis of mutual interactions between topology and soft topology. It is known that each soft topology produces a system of crisp (parameterized) topologies. The other way round is also possible. Namely, one can generate a soft topology from a system of crisp topologies. Different methods of producing soft topologies are discussed by implementing two formulas. Then, the relationships between the resulting soft topologies are obtained. With the help of an example, it is demonstrated that one formula is more constructible than the other. Now, it is reasonable to ask which (topological) properties of a soft topology can be transferred to the set of crisp topologies, or the opposite. To address this question, we consider the standard separation axioms and show how well these axioms can be preserved when moving from a system of crisp topologies to the soft topology generated by it and contrariwise. Additionally, our findings extend and disprove some results from the literature.
\end{abstract}
\textbf{Key words and phrases:}  soft topology, soft $T_0$, soft $T_1$, soft $T_2$, soft regular, soft normal, soft $T_3$, soft $T_4$.\\
\textbf{2020 MSC:} 54A05, 54H99.
	
\section{Introduction}\label{sec1}
In its modern version, the Weierstrass Extreme Value Theorem demonstrates that topological considerations can be useful in decision-making theory and economics, (see, \cite{alcantud2022relationship}). Indeed, the development of topological structures helps to enhance other disciplines. 

General topology is the mathematical branch of topology that concerns itself with the foundational set-theoretic notions and constructions. Motivated by the standard axioms of classical topological space, Shabir and Naz \cite{shabir2011soft}, and {\c{C}}a{\u{g}}man et al. \cite{ccaugman2011soft}, separately, introduced another branch of topology named "soft topology."

Soft topology is a combination of soft set theory and topology. It is focused on the construction of the system of all soft sets.

Soft sets were presented as a collection of relevant parameters to characterize a universe of possibilities. Soft set theory has been a fruitful area of study and connection with various disciplines since its establishment. Molodtsov \cite{molodtsov1999soft}, in 1999, originated the soft set theory as a mathematical tool for dealing with uncertainty which is free of the challenges related with other theories such as fuzzy set theory \cite{ZADEH1965338}, rough set theory \cite{pawlak1982rough}, and so on. In particular, the nature of parameter sets associated with soft sets provides a standardized foundation for modeling uncertain data. This leads to the rapid growth of soft set theory and soft topology in a short amount of time and provides various applications of soft sets in real life. 

There are various studies that have made significant contributions to the development of soft topology since its foundation in \cite{ccaugman2011soft,shabir2011soft}. A soft topological approach was then used to interpret the behavior of the most fundamental concepts in (general) topology. To be specific, soft compactness \cite{aygunouglu2012some}, soft connectedness \cite{lin2013soft}, soft extremal disconnectedness \cite{asaad2017results}, soft submaximality \cite{alg2022soft}, soft simple extendedness \cite{ameen2022extensions}, and soft continuity \cite{zorlutuna2012remarks}.

Different methods of generating soft topologies on a common universal set were discussed in \cite{alcantud2020soft,alcantud2022relationship,nazmul2013neighbourhood,terepeta2019separating,zorlutuna2012remarks}.

Soft continuity of mappings has been widely generalized to diverse classes, including soft semi-continuity \cite{mahanta2012soft}, soft $\beta$-continuity \cite{yumak2015soft}, soft somewhat continuity \cite{ameen2021softsomewhat} and soft $\mathcal{U}$-continuity \cite{ameennon}.

Soft separation axioms are another significant aspect in the late development of soft topology; see for example \cite{asaad2022hypersoft,min2011note,shabir2011soft}.

Two remarkable formulas for generating soft topologies from a system of crisp topologies have been given by Terepeta \cite{terepeta2019separating}. One of the formulas (Formula 2) is said to generate a single set soft topology, while the other one generates a more general soft topology (Formula 1). Terepeta mainly applied Formula 2 to study the inheritance of soft separation axioms after the system of crisp topologies. Recently, Alcantud \cite{alcantud2020soft} proposed a slight extension of Formula 1 (we also call it Formula 1). He then employed such a formula to investigate the behavior of separability and second countability axioms between a system of crisp topologies and the soft topology generated by it. Very recently, Alcantud \cite{alcantud2022relationship} established crucial relationships between soft and fuzzy soft topologies. The work of Terepeta and Alcantud inspired us to attempt this research. Following their direction, we first apply the formulas to the system of crisp topologies taken from a soft topology in order to determine the connections between the obtained soft topologies and the original one. In addition, we use Formula 1 to verify how well the separation axioms are transferred between a system of crisp topologies and the soft topology that it generates. The latter statement extends the work of Terepeta (see, Section 2.1 in \cite{terepeta2019separating}), which is the main objective of this research.

\section{Preliminaries}\label{se2}\
Let $X$ be an initial universe, $\mathcal{P}(X)$ be all subsets of $X$ and $E$ be a set of parameters. An ordered pair $(F,E)=\{(e,F(e)):e\in E\}$ is said to be a soft set over $X$, where $F:E\to\P(X)$ is a set value mapping. The family of all soft sets on $X$ is represented by $S_E(X)$. The soft set $(X,E)\backslash (F,E)$ (or simply $(F,E)^c=(F^c,E)$) is the complement of $(F,E)$, where $F^c:E\to\mathcal{P}(X)$ is given by $F^c(e) = X\backslash F(e)$ for each $e\in E$. A soft subset $(F,E)$ over $X$ is called null, denoted by $\widetilde{\Phi}$, if $F(e)=\emptyset$ for each $e\in E$ and is called absolute, denoted by $\widetilde{X}$, if $F(e)=X$ for each $e\in E$. Notice that ${\widetilde{X}}^c=\widetilde{\Phi}$ and $\widetilde{\Phi}^c=\widetilde{X}$. It is said that $(A,E_1)$ is a soft subset of $(B,E_2)$ (written by $(A,E_1)\widetilde{\subseteq} (B,E_2)$, \cite{maji2003soft}) if $E_1\subseteq E_2$ and $A(e)\subseteq B(e)$ for each $e\in E_1\subseteq E$, and $(A,E_1)=(B,E_2)$ if $(A,E_1)\widetilde{\subseteq} (B,E_2)$ and $(B,E_2)\widetilde{\subseteq} (A,E_1)$. The union of soft sets $(A,E), (B,E)$ is represented by $(F,E)=(A,E)\widetilde{\cup} (B,E)$, where $F(e)=A(e)\cup B(e)$ for each $e\in E$, and intersection of soft sets $(A,E), (B,E)$ is given by $(F,E)=(A,E)\widetilde{\cap} (B,E)$, where $F(e)=A(e)\cap B(e)$ for each $e\in E$, see \cite{ali2009some}. A soft point \cite{shabir2011soft} is a soft set $(F,E)$ over $X$ in which $F(e) = \{x\}$ for each $e\in E$, where $x\in X$, and is denoted by $(\{x\},E)$. It is said that a soft point $(\{x\},E)$ is in $(F,E)$ (briefly, $x\in (F,E)$) if $x\in F(e)$ for each $e\in E$. On the other hand, $x\notin (F,E)$ if $x\notin F(e)$ for some $e\in E$. This implies that if $(\{x\},E)\widetilde{\bigcap}(F,E)=\widetilde{\Phi}$, then $x\notin (F,E)$.

	\begin{defn}\cite{shabir2011soft}\label{defntop}
		A collection $\s$ of $S_E(X)$ is said to be a soft topology on $X$ if the following conditions are satisfied:
		\begin{enumerate}[(i)]
			\item $\widetilde{\Phi},\widetilde{X}\in\s$;
			\item If $(F_1,E), (F_2,E)\in\s$, then $(F_1,E)\widetilde{\cap} (F_2,E)\in\s$; and
			\item If each $\{(F_i,E):i\in I\}\widetilde{\subseteq}\s$, then $\widetilde{\bigcup}_{i\in I} (F_i,E)\in\s$.
		\end{enumerate}
Terminologically, we call $(X,\s, E)$ a soft topological space on $X$. The elements of $\s$ are called soft open sets in $\s$ (or simply, soft open sets when no confusion arises), and their complements are called soft closed sets in $\s$ (or shortly, soft closed sets).
	\end{defn}

In what follows, by $(X,\s, E)$ we mean a soft topological space, and by disjoint of two soft sets $(F,E), (G,E)$ over $X$ we mean $(F,E)\widetilde{\cap}(G,E)=\widetilde{\Phi}$.

\begin{defn}\cite{ccaugman2011soft}
A subcollection $\mathcal B\subseteq\s$ is called a soft base for the soft topology $\s$ if each element of $\s$ is a union of elements of $\mathcal B$.
\end{defn}

\begin{defn}\cite{ccaugman2011soft}
	Let $\s_1, \s_2$ be two soft topologies on $X$. It is said that $\s_2$ is finer than $\s_1$ (or $\s_1$ is coarser than $\s_2$) if $\s_1\widetilde{\subseteq}\s_2$ 
\end{defn}

\begin{lem}\label{crisp}\cite{shabir2011soft}
Let $(X,\s, E)$ be a soft topology on $X$. For each $e\in E$, $\s_{e}=\{F(e):(F,E)\in\s\}$ is a crisp topology on $X$.
\end{lem}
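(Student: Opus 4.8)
The plan is to verify directly that $\s_e$ satisfies the three defining conditions of a (crisp) topology on $X$, exploiting the fact that every soft-set operation in Definition \ref{defntop} is computed parameter-wise. The guiding observation is that the evaluation map $\pi_e:S_E(X)\to\P(X)$ sending $(F,E)$ to $F(e)$ carries the null and absolute soft sets to $\emptyset$ and $X$, and commutes with soft intersection and soft union; thus each closure property of $\s$ descends to the corresponding closure property of its image $\s_e=\{F(e):(F,E)\in\s\}$.

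For the first axiom, I would note that $\widetilde{\Phi},\widetilde{X}\in\s$ by condition (i) of Definition \ref{defntop}, and since $\widetilde{\Phi}(e)=\emptyset$ and $\widetilde{X}(e)=X$ by the definition of the null and absolute soft sets, both $\emptyset$ and $X$ lie in $\s_e$.

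For finite intersections, take $U,V\in\s_e$; by the definition of $\s_e$ there are $(F,E),(G,E)\in\s$ with $F(e)=U$ and $G(e)=V$. Condition (ii) gives $(F,E)\widetilde{\cap}(G,E)\in\s$, and since soft intersection is defined pointwise its $e$-component is $F(e)\cap G(e)=U\cap V$, so $U\cap V\in\s_e$; iterating yields closure under all finite intersections. For arbitrary unions, given a family $\{U_i:i\in I\}\subseteq\s_e$ I would choose, for each $i$, a soft open set $(F_i,E)\in\s$ with $F_i(e)=U_i$; condition (iii) then gives $\widetilde{\bigcup}_{i\in I}(F_i,E)\in\s$, whose $e$-component equals $\bigcup_{i\in I}F_i(e)=\bigcup_{i\in I}U_i$, placing this union in $\s_e$.

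The only point requiring a moment's care — rather than a genuine obstacle — is the selection of a representative soft open set $(F_i,E)$ for each $U_i$ in the union step, which invokes the axiom of choice; everything else is a routine transcription of the pointwise nature of the soft operations. I expect no real difficulty, as the essence of the argument is simply that $\pi_e$ preserves the lattice operations and the distinguished elements involved in the three topology axioms.
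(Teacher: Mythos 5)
Your proof is correct and takes the same route as the source: the paper states this lemma as a citation to \cite{shabir2011soft} without reproducing a proof, and the standard argument there is precisely your parameter-wise verification that evaluation at $e$ sends $\widetilde{\Phi}$ and $\widetilde{X}$ to $\emptyset$ and $X$ and commutes with soft intersection and soft union. One small simplification: the appeal to the axiom of choice in the union step is avoidable, since you may instead take the soft union of \emph{all} $(F,E)\in\s$ with $F(e)\in\{U_i:i\in I\}$, which lies in $\s$ by condition (iii) and whose $e$-component is still $\bigcup_{i\in I}U_i$.
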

	
\begin{defn}\cite{ameen2022minimal}
Let $\mathcal{F}\widetilde{\subseteq}S_E(X)$. The intersection of all soft topologies on $X$ including $\mathcal{F}$ is called a soft topology generated by $\mathcal{F}$ and is referred to $T(\mathcal{F})$.  
\end{defn}

\begin{lem}\cite[Lemma 3.5]{alghour2022maximal}\label{zlemma}
	Let $\s_1, \s_2$ be two soft topologies on $X$. The resulting soft topology $T(\s_1\widetilde{\cup} \s_2)$ is identical to the soft topology $T(\mathcal{F})$ generated by $\mathcal{F}=\{(F_1,E)\widetilde{\cap}(F_2,E):(F_1,E)\in\s_1,(F_2,E) \in\s_2\}$.
\end{lem}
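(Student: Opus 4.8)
The plan is to show the two generated soft topologies coincide by proving each contains the other, using the universal (minimality) property of the generation operator $T(\cdot)$. Recall that $T(\mathcal{F})$ is the smallest soft topology containing $\mathcal{F}$, so to prove $T(\mathcal{F})\widetilde{\subseteq} T(\s_1\widetilde{\cup}\s_2)$ it suffices to check that $\mathcal{F}\widetilde{\subseteq} T(\s_1\widetilde{\cup}\s_2)$, and symmetrically for the reverse inclusion.

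\medskip

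For the inclusion $T(\mathcal{F})\widetilde{\subseteq} T(\s_1\widetilde{\cup}\s_2)$, first I would take an arbitrary generator $(F_1,E)\widetilde{\cap}(F_2,E)$ with $(F_1,E)\in\s_1$ and $(F_2,E)\in\s_2$. Since $\s_1\widetilde{\subseteq}\s_1\widetilde{\cup}\s_2\widetilde{\subseteq} T(\s_1\widetilde{\cup}\s_2)$ and likewise $\s_2\widetilde{\subseteq} T(\s_1\widetilde{\cup}\s_2)$, both $(F_1,E)$ and $(F_2,E)$ lie in $T(\s_1\widetilde{\cup}\s_2)$; because a soft topology is closed under finite soft intersection (Definition \ref{defntop}(ii)), their intersection also belongs to $T(\s_1\widetilde{\cup}\s_2)$. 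Hence every element of $\mathcal{F}$ is in $T(\s_1\widetilde{\cup}\s_2)$, and by minimality of $T(\mathcal{F})$ the inclusion follows.

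\medskip

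For the reverse inclusion $T(\s_1\widetilde{\cup}\s_2)\widetilde{\subseteq} T(\mathcal{F})$, it suffices to show $\s_1\widetilde{\cup}\s_2\widetilde{\subseteq} T(\mathcal{F})$, and again minimality of $T(\s_1\widetilde{\cup}\s_2)$ closes the argument. So I would take $(F_1,E)\in\s_1$ and observe it can be written as a generator by pairing it with the absolute soft set: since $\widetilde{X}\in\s_2$ and $(F_1,E)\widetilde{\cap}\widetilde{X}=(F_1,E)$, we get $(F_1,E)\in\mathcal{F}\widetilde{\subseteq} T(\mathcal{F})$. Symmetrically, for $(F_2,E)\in\s_2$ we use $\widetilde{X}\in\s_1$ and $\widetilde{X}\widetilde{\cap}(F_2,E)=(F_2,E)$ to place $(F_2,E)$ in $\mathcal{F}$. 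Thus $\s_1\widetilde{\cup}\s_2\widetilde{\subseteq}\mathcal{F}\widetilde{\subseteq} T(\mathcal{F})$.

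\medskip

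I do not anticipate a genuinely hard step here; the proof is essentially a double application of the universal property together with closure under finite intersection. The one point requiring mild care is the reverse inclusion, where the trick of intersecting with $\widetilde{X}$ depends on $\widetilde{X}$ belonging to each $\s_i$, which is guaranteed by Definition \ref{defntop}(i). Everything else is a routine unwinding of the definitions of soft subset, soft intersection, and the minimality characterization of $T(\cdot)$.
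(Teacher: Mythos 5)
Your proof is correct and complete. Note that the paper itself offers no proof of this lemma---it is quoted from \cite{alghour2022maximal}---so there is nothing internal to compare against; your argument (minimality of $T(\cdot)$ in both directions, closure of soft topologies under finite soft intersection for one inclusion, and the observation that $(F_1,E)=(F_1,E)\widetilde{\cap}\widetilde{X}$ with $\widetilde{X}\in\s_2$ for the other) is the standard and natural one, and every step is justified by the paper's Definition \ref{defntop} and the definition of $T(\mathcal{F})$ as the intersection of all soft topologies containing $\mathcal{F}$.
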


\begin{defn}\label{T_i}\cite{shabir2011soft}
	A soft space $(X, \s, E)$ is called
	\begin{enumerate}[(i)]
		\item soft $T_0$ if for each $x,y\in X$ with $x\ne y$, there exist soft open sets $(U,E), (V,E)$ such that $x\in (U,E)$, $y\notin  (U,E)$ or $x\notin  (V,E)$, $y\in  (V,E)$,
		\item soft $T_1$ if for each $x,y\in X$ with $x\ne y$, there exist soft open sets $(U,E), (V,E)$ such that $x\in (U,E)$, $y\notin  (U,E)$ and $x\notin  (V,E)$, $y\in  (V,E)$,
		\item soft $T_2$ $($soft Hausdorff$)$ if for each $x,y\in X$ with $x\ne y$, there exist soft open sets $(U,E), (V,E)$ containing $x, y$ respectively such that $(U,E)\widetilde{\bigcap} (V,E)=\widetilde{\Phi}$. 
		\item soft regular if for each soft closed set $(F,E)$ and each soft point $x$ with $x\notin (F,E)$, there exist soft open sets $(U,E), (V,E)$ such that $x\in  (U,E)$, $(F,E)\widetilde{\subseteq} (V,E)$ and $(U,E)\widetilde{\bigcap} (V,E)=\widetilde{\Phi}$.
		\item soft normal if for each soft closed sets $(F,E),(D,E)$ with $(F,E)\widetilde{\bigcap}(D,E)=\widetilde{\Phi}$, there exist soft open sets $(U,E), (V,E)$ such that $(F,E)\widetilde{\subseteq} (U,E)$, $(D,E)\widetilde{\subseteq} (V,E)$ and $(U,E)\widetilde{\bigcap} (V,E)=\widetilde{\Phi}$.
		\item soft $T_3$ if it is soft $T_1$ and soft regular.
		\item soft $T_4$ if it is soft $T_1$ and soft normal.
	\end{enumerate}
\end{defn}

\begin{lem}\cite[Theorem 3.18]{min2011note}\label{te1=te2}
	If $(X,\s,E)$ is a soft regular space, then $\s_e=\s_{e'}$ for each $e,e'\in E$.
\end{lem}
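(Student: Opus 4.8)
The plan is to prove something stronger than stated, from which the conclusion is immediate: I will show that in a soft regular space every soft open set $(U,E)$ has identical slices, i.e.\ $U(e)=U(e')$ for all $e,e'\in E$; equivalently, every soft closed set $(C,E)$ satisfies $C(e)=C(e')$ for all $e,e'$. Granting this, $\s_e=\{U(e):(U,E)\in\s\}=\{U(e'):(U,E)\in\s\}=\s_{e'}$, which is exactly the assertion. So it suffices to establish the slice-invariance of soft closed sets.

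I would argue by contradiction at the level of a single closed set. Fix a soft closed set $(C,E)$ and two parameters $e,e'$, and suppose there is a point $x\in C(e)$ with $x\notin C(e')$. The decisive observation is that, because a soft point is the \emph{full} soft point $(\{x\},E)$ that equals $\{x\}$ at every parameter, the single failure $x\notin C(e')$ already yields $x\notin(C,E)$ (by the paper's convention, membership fails as soon as it fails at one parameter). Hence the soft point $(\{x\},E)$ is a legitimate input to the regularity axiom against the closed set $(C,E)$.

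Now I would apply soft regularity: there exist soft open sets $(P,E),(Q,E)$ with $x\in(P,E)$, $(C,E)\widetilde{\subseteq}(Q,E)$ and $(P,E)\widetilde{\cap}(Q,E)=\widetilde{\Phi}$. Reading these three relations parameterwise, for every $f\in E$ we have $x\in P(f)$, $C(f)\subseteq Q(f)$, and $P(f)\cap Q(f)=\emptyset$, whence $x\in P(f)\subseteq X\setminus Q(f)\subseteq X\setminus C(f)$. In particular $x\notin C(f)$ for \emph{every} $f$, and taking $f=e$ contradicts $x\in C(e)$. Therefore $C(e)\subseteq C(e')$, and by symmetry $C(e)=C(e')$, completing the stronger claim and hence the lemma.

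The part deserving care --- and the real content --- is the first paragraph's reduction together with the propagation step: the leverage comes entirely from the fullness of the soft point. Because $(\{x\},E)$ occupies $\{x\}$ in every coordinate, regularity forces the separating neighbourhood $(P,E)$ to avoid $(C,E)$ simultaneously in all coordinates, so a defect of membership at one parameter spreads to all parameters at once. I expect no genuine obstacle beyond being careful with the asymmetric ``for some parameter'' definition of $x\notin(C,E)$ versus the ``for all parameters'' definitions of soft inclusion and disjointness; note in particular that the argument uses only regularity and no $T_1$-type separation.
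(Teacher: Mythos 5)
Your proof is correct; note that there is no in-paper proof to compare it with, since the paper imports Lemma \ref{te1=te2} from Min \cite[Theorem 3.18]{min2011note} without argument, so your proposal supplies a justification where the paper offers only a citation. Under the conventions of Section \ref{se2} your reasoning is sound: a soft point is the \emph{full} point $(\{x\},E)$, so $x\in(F,E)$ quantifies universally over parameters while $x\notin(F,E)$ quantifies existentially, and your propagation step exploits precisely this asymmetry --- the single failure $x\notin C(e')$ licenses the regularity axiom, whose three conclusions then hold at \emph{every} parameter $f$ (namely $x\in P(f)$, $C(f)\subseteq Q(f)$, $P(f)\cap Q(f)=\emptyset$), contradicting $x\in C(e)$. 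Your route also buys strictly more than the lemma asserts: soft regularity forces every soft open (equivalently, soft closed) set to be constant across parameters, and $\s_e=\s_{e'}$ then follows immediately via Lemma \ref{crisp}. This sharper fact illuminates the rest of the paper: it explains at a glance why the one-point example refuting the converse of Theorem \ref{thm4} fails to be soft regular (its soft topology contains the non-constant soft set $\{(e_1,\emptyset),(e_2,X)\}$), and more generally it shows that whenever $|E|\ge 2$ and $X\ne\emptyset$ the soft topology $\T(\bs)$ always contains the non-constant soft set equal to $X$ at one parameter and $\emptyset$ elsewhere, so $\T(\bs)$ is never soft regular and the hypothesis of Theorem \ref{thm4} can then hold only vacuously --- an observation the paper's own treatment does not surface.
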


\section{Methods of generating soft topologies and their relationships}
This section provides different methods of producing soft topologies via Formulas 1 \& 2. An example is given which discusses the implementation of these formulas in detail. The relationships between the original soft topology and the soft topologies that are produced by Formulas 1 \& 2.
\begin{defn}\cite{alcantud2020soft,terepeta2019separating}\label{defn1}
Let $\bs=\{\s_e\}$ be a family of (crisp) topologies on a set $X$ indexed by $E$. Then following procedures produce different soft topologies on $X$:
\begin{equation*}
(\text{Formula } 1)\hspace{1cm} \T(\bs)=\Big\{\{(e,F(e)):e\in E\}\in S_E(X):F(e)\in \s_e, \forall e\in E \Big\},\\
\end{equation*}
$\T(\bs)$ is called a soft topology generated by $\bs$. If for each $e,e'\in E$, $\s_e=\s_{e'}=\s$, then $\T(\bs)=\T(\s)$.

\begin{equation*}
	(\text{Formula } 2)\hspace{.5cm} \h(\s_e)=\Big\{\{(e,F(e)):e\in E\}\in S_E(X):F(e)=F(e')\in\s_e, \forall e,e'\in E \Big\},\\
\end{equation*}
$\h(\s_e)$ is called a single set soft topology generated by $\s_e$. 
\end{defn}

\begin{defn}\label{asso}
Let $(X,\s,E)$ be a soft topological space. If $\bs=\{\s_e:e\in E\}$ is the family of all crisp topologies from $\s$, then we call $\T(\bs)$ the soft topology associated with $\s$.

Note that $\T(\bs)$ is called an extended soft topology in \cite{nazmul2013neighbourhood}.
\end{defn}

\begin{lem}\label{l1}
Let $\bs=\{\s_e:e\in E\}$ be the family of all crisp topologies from $(X,\s,E)$. Then $$\s\widetilde{\subseteq}\T(\bs).$$
\end{lem}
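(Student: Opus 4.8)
The plan is to verify the claimed inclusion directly from the definitions, since the containment reduces to a slice-wise observation. Recall that $\widetilde{\subseteq}$ between two soft topologies denotes ordinary inclusion of the underlying families of soft sets (as in the definition of ``finer/coarser''), so it suffices to take an arbitrary $(F,E)\in\s$ and show that $(F,E)$ satisfies the defining membership condition of $\T(\bs)$ given by Formula 1.

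First I would fix an arbitrary soft open set $(F,E)\in\s$ together with an arbitrary parameter $e\in E$. By Lemma \ref{crisp}, the collection $\s_e=\{G(e):(G,E)\in\s\}$ is a crisp topology on $X$, and by its very construction every value $G(e)$ of a member $(G,E)\in\s$ lies in $\s_e$. Applying this directly to $(F,E)$ itself yields $F(e)\in\s_e$. Since $e\in E$ was arbitrary, I thereby obtain $F(e)\in\s_e$ for all $e\in E$.

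This last condition is exactly the membership criterion for $\T(\bs)$ recorded in Formula 1 of Definition \ref{defn1}. Hence $(F,E)\in\T(\bs)$, and as $(F,E)$ was an arbitrary element of $\s$, this establishes $\s\widetilde{\subseteq}\T(\bs)$.

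I do not expect a genuine obstacle here: the statement is essentially a tautological consequence of how the family $\bs$ is \emph{extracted} from $\s$ and how Formula 1 \emph{reassembles} a soft topology from slice-wise open sets. The only point deserving a moment's care is the notational one---confirming that $\widetilde{\subseteq}$ refers to inclusion of soft topologies rather than to the soft-subset relation between two individual soft sets---after which the argument is immediate and requires no further machinery.
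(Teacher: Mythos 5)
Your proof is correct and follows exactly the route the paper intends: the paper's own proof is a one-line remark that the inclusion "can be concluded from the definition of soft sets and the soft topology generated by $\bs$," which is precisely the definitional unwinding you carry out (each $(F,E)\in\s$ has $F(e)\in\s_e$ by the construction of $\s_e$ in Lemma \ref{crisp}, hence satisfies the membership criterion of Formula 1). Your writeup simply makes explicit the details the paper leaves implicit.
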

\begin{proof}
It can be concluded from the definition of soft sets and the soft topology generated by $\bs$.
\end{proof}

\begin{lem}\label{base(e)}
Let $\bar{\beta}=\{\beta_e:e\in E\}$ be a family of bases for the topologies $\s_e$ on $X$. Then $\mathcal{B}(\bar{\beta})=\Big\{\{(e,F(e)):e\in E\}\in S_E(X):F(e)\in\beta_e\cup\{\emptyset\}, \forall e\in E \Big\}$ is a base for a soft topology on $X$ and $\T(\bs)=T(\mathcal{B}(\bar{\beta}))$.
\end{lem}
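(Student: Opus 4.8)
The plan is to prove both assertions at once by showing that a soft set belongs to $\T(\bs)$ precisely when it is a soft union of members of $\mathcal{B}(\bar\beta)$; the base property and the identity $\T(\bs)=T(\mathcal{B}(\bar\beta))$ then follow together.

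First I would dispose of the easy inclusion. Since each $\beta_e$ is a base for $\s_e$ we have $\beta_e\subseteq\s_e$, and of course $\emptyset\in\s_e$; hence every member of $\mathcal{B}(\bar\beta)$ has all of its $e$-values in $\s_e$ and so lies in $\T(\bs)$ by Formula 1, i.e. $\mathcal{B}(\bar\beta)\widetilde{\subseteq}\T(\bs)$. As $\T(\bs)$ is a soft topology containing $\mathcal{B}(\bar\beta)$, the minimality built into the definition of the generated soft topology yields $T(\mathcal{B}(\bar\beta))\widetilde{\subseteq}\T(\bs)$ immediately.

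The heart of the matter is the reverse direction: an arbitrary $(F,E)\in\T(\bs)$ (so $F(e)\in\s_e$ for every $e$) must be written as a soft union of elements of $\mathcal{B}(\bar\beta)$. Here lies the one delicate point. One is tempted to cover $(F,E)$ by the soft points it contains, but a soft point here is constant across parameters, so the union of the soft points lying in $(F,E)$ only recovers the constant soft set whose value at every coordinate is $\bigcap_{e'\in E}F(e')$, not $F(e)$; the naive approach fails as soon as the $F(e)$ differ. The correct device is to use basic soft sets supported on a single parameter. Fixing $e_0\in E$ and a point $x\in F(e_0)$, choose (using that $\beta_{e_0}$ is a base for $\s_{e_0}$) a set $B^{e_0}_{x}\in\beta_{e_0}$ with $x\in B^{e_0}_{x}\subseteq F(e_0)$, and define $(G_{e_0,x},E)$ by $G_{e_0,x}(e_0)=B^{e_0}_{x}$ and $G_{e_0,x}(e)=\emptyset$ for $e\ne e_0$. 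Because $\emptyset\in\beta_e\cup\{\emptyset\}$, each $(G_{e_0,x},E)$ lies in $\mathcal{B}(\bar\beta)$, and clearly $(G_{e_0,x},E)\widetilde{\subseteq}(F,E)$. Taking the union over all $e_0\in E$ and all $x\in F(e_0)$, at a fixed coordinate $e$ the only nonempty contributions are the terms with $e_0=e$, whose union is $\bigcup_{x\in F(e)}B^{e}_{x}=F(e)$ because the chosen basic sets cover $F(e)$. Thus $(F,E)=\widetilde{\bigcup}_{e_0\in E,\,x\in F(e_0)}(G_{e_0,x},E)$ is a union of members of $\mathcal{B}(\bar\beta)$.

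This computation does double duty. Applied to $\widetilde X\in\T(\bs)$ it shows $\widetilde X$ is a union of members of $\mathcal{B}(\bar\beta)$, and applied to the intersection $(B_1,E)\widetilde{\cap}(B_2,E)$ of two base elements — which again lies in $\T(\bs)$ — it shows that intersection is a union of members of $\mathcal{B}(\bar\beta)$ as well; these are exactly the two conditions certifying that $\mathcal{B}(\bar\beta)$ is a base for a soft topology. Since the soft topology generated by a base consists of all soft unions of its members, the third paragraph gives $\T(\bs)\widetilde{\subseteq}T(\mathcal{B}(\bar\beta))$, and together with the easy inclusion this proves $\T(\bs)=T(\mathcal{B}(\bar\beta))$. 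The only genuine obstacle is the coordinatewise nature of the construction above; once soft points are abandoned in favour of single-parameter-supported basic soft sets, the argument reduces to the familiar fact that a base covers each open set coordinate by coordinate.
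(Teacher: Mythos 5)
Your proof is correct, but it is genuinely different in character from what the paper does: the paper gives no argument at all, disposing of the lemma by citing Corollary 3 and Theorem 3 of Alcantud's 2020 paper ``with simple modifications,'' whereas you supply a complete self-contained proof. Your key device --- the soft sets supported on a single parameter, $G_{e_0,x}(e_0)=B^{e_0}_{x}$ and $G_{e_0,x}(e)=\emptyset$ for $e\ne e_0$ --- is exactly the construction $(F^{e}_{H},E)$ that the paper only introduces later, in Definition \ref{zset}(ii) (itself borrowed from Alcantud's Definition 5), and it is the same mechanism that drives the cited external proof; so in spirit you have reconstructed the outsourced argument rather than found an alternative one. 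Your proof buys verifiability inside the paper and, usefully, makes explicit the pitfall that the citation hides: soft points are constant across parameters, so point-covering only recovers the constant soft set with value $\bigcap_{e'\in E}F(e')$, which is why the single-parameter sets are unavoidable. One small streamlining is available to you: since your first paragraph gives $\mathcal{B}(\bar{\beta})\widetilde{\subseteq}\T(\bs)$ and your third paragraph shows every member of $\T(\bs)$ is a soft union of members of $\mathcal{B}(\bar{\beta})$, the collection $\mathcal{B}(\bar{\beta})$ is already a soft base for the soft topology $\T(\bs)$ in the sense of the paper's Definition 2.2; the detour through the abstract two-condition criterion (cover plus intersections being unions), which the paper never states for soft topologies, can therefore be dropped, leaving only the two inclusions $T(\mathcal{B}(\bar{\beta}))\widetilde{\subseteq}\T(\bs)$ (by minimality of the generated soft topology) and $\T(\bs)\widetilde{\subseteq}T(\mathcal{B}(\bar{\beta}))$ (by closure under soft unions).
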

\begin{proof}
By using Corollary 3 in \cite{alcantud2020soft} and simple modifications to the proof of Theorem 3 in \cite{alcantud2020soft}, we can conclude the proof.
\end{proof}

The following is a straightforward generalizations of Lemma \ref{zlemma}, thus the proof is omitted:
\begin{lem}\label{zlemma2}
	Let $\{\s_e:e\in E\}$ be a family of soft topologies on $X$. The resulting soft topology $T(\widetilde{\bigcup}_{e\in E} \s_e)$ is identical to the soft topology $T(\mathcal{F})$ generated by $\mathcal{F}=\{\widetilde\bigcap_{{e_i}=1}^n(F_{e_i},E):(F_{e_i},E)\in \widetilde{\bigcup}_{{e_i}\in E}\s_{e_i}\}$.
\end{lem}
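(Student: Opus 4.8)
The plan is to establish the stated identity by proving the two mutual soft inclusions $T(\mathcal{F})\widetilde{\subseteq}T(\widetilde{\bigcup}_{e\in E}\s_e)$ and $T(\widetilde{\bigcup}_{e\in E}\s_e)\widetilde{\subseteq}T(\mathcal{F})$, exactly mirroring the two-topology argument behind Lemma \ref{zlemma}. The conceptual content is that the family $\widetilde{\bigcup}_{e\in E}\s_e$ plays the role of a soft subbase, while $\mathcal{F}$ — the collection of all of its finite soft intersections — plays the role of the associated soft base; passing to the generated soft topology then yields the same object from either starting point.

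First I would prove $T(\mathcal{F})\widetilde{\subseteq}T(\widetilde{\bigcup}_{e\in E}\s_e)$. Each member of $\mathcal{F}$ has the form $\widetilde{\bigcap}_{i=1}^{n}(F_{e_i},E)$, where every factor $(F_{e_i},E)$ lies in some $\s_{e_i}$ and hence in $\widetilde{\bigcup}_{e\in E}\s_e\widetilde{\subseteq}T(\widetilde{\bigcup}_{e\in E}\s_e)$. Since $T(\widetilde{\bigcup}_{e\in E}\s_e)$ is a soft topology, condition (ii) of Definition \ref{defntop} guarantees closure under finite soft intersection, so the whole finite intersection again belongs to $T(\widetilde{\bigcup}_{e\in E}\s_e)$. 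Thus $\mathcal{F}\widetilde{\subseteq}T(\widetilde{\bigcup}_{e\in E}\s_e)$, and because $T(\mathcal{F})$ is by definition the smallest soft topology containing $\mathcal{F}$, the inclusion follows.

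For the reverse inclusion, I would observe that taking $n=1$ shows every single $(F,E)\in\s_e$ already appears in $\mathcal{F}$, whence $\widetilde{\bigcup}_{e\in E}\s_e\widetilde{\subseteq}\mathcal{F}\widetilde{\subseteq}T(\mathcal{F})$. Since $T(\widetilde{\bigcup}_{e\in E}\s_e)$ is the smallest soft topology containing $\widetilde{\bigcup}_{e\in E}\s_e$, this yields $T(\widetilde{\bigcup}_{e\in E}\s_e)\widetilde{\subseteq}T(\mathcal{F})$, and combining the two inclusions gives the desired equality.

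The only point demanding care — and the reason this is a genuine generalization rather than a verbatim copy of Lemma \ref{zlemma} — is the bookkeeping of finite intersections drawn from an arbitrary (possibly infinite) family $\{\s_e:e\in E\}$ rather than from just two topologies. Here the pairwise intersection $(F_1,E)\widetilde{\cap}(F_2,E)$ of the original lemma is replaced by an arbitrary finite intersection $\widetilde{\bigcap}_{i=1}^{n}(F_{e_i},E)$ whose factors may be chosen from distinct topologies. The argument nevertheless goes through unchanged, since soft topologies are closed under finite intersection and arbitrary union (conditions (ii)–(iii) of Definition \ref{defntop}), so the infinitude of $E$ never forces an infinite intersection. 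I therefore do not expect any serious obstacle beyond organizing this finite-intersection notation correctly, which is precisely why the authors regard the proof as a straightforward generalization.
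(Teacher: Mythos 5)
Your proof is correct: the double-inclusion argument via minimality of the generated soft topology---closure under finite soft intersections (Definition \ref{defntop}(ii)) giving $\mathcal{F}\widetilde{\subseteq}T(\widetilde{\bigcup}_{e\in E}\s_e)$, and the $n=1$ case giving $\widetilde{\bigcup}_{e\in E}\s_e\widetilde{\subseteq}\mathcal{F}$---is precisely the ``straightforward generalization'' of Lemma \ref{zlemma} that the paper invokes when it omits the proof. You have simply written out in full what the authors left to the reader, and no gap remains.
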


\begin{lem}\label{l2}
Let $\bs=\{\s_e:e\in E\}$ be a family of crisp topologies on $X$. Then 
\begin{equation*}
	T(\widetilde{\bigcup}_{e\in E}\h(\s_e))=\h(T(\bigcup_{e\in E}\s_e)).
\end{equation*}
\end{lem}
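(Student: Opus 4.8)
The plan is to establish the equality by two inclusions, after rewriting the right-hand side explicitly. Put $\tau=T(\bigcup_{e\in E}\s_e)$ for the crisp topology generated by $\bigcup_{e\in E}\s_e$, so that, unravelling Formula 2, the right-hand side is $\h(\tau)=\{(U,E):U\in\tau\}$, where $(U,E)$ denotes the constant soft set whose value is $U$. The structural fact I would record at the outset is that the assignment $U\mapsto(U,E)$ commutes with the soft operations used in topology generation: $\widetilde{\bigcap}_{i=1}^{n}(U_i,E)=(\bigcap_{i=1}^{n}U_i,E)$ and $\widetilde{\bigcup}_{j}(U_j,E)=(\bigcup_{j}U_j,E)$, and it sends $\emptyset,X$ to $\widetilde{\Phi},\widetilde{X}$. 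In particular $\widetilde{\bigcup}_{e\in E}\h(\s_e)$ consists precisely of the constant soft sets $(U,E)$ with $U\in\bigcup_{e\in E}\s_e$.

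For the inclusion $T(\widetilde{\bigcup}_{e\in E}\h(\s_e))\widetilde{\subseteq}\h(\tau)$, I would first note that $\h(\tau)$ is a soft topology (immediate from the commuting identities above and the fact that $\tau$ is a topology). Since $\s_e\subseteq\tau$ for every $e$, each $\h(\s_e)\widetilde{\subseteq}\h(\tau)$, hence $\widetilde{\bigcup}_{e\in E}\h(\s_e)\widetilde{\subseteq}\h(\tau)$. Because $T(\widetilde{\bigcup}_{e\in E}\h(\s_e))$ is the smallest soft topology containing $\widetilde{\bigcup}_{e\in E}\h(\s_e)$, it is contained in $\h(\tau)$.

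For the reverse inclusion I would invoke the subbasis description of the generated crisp topology: every $U\in\tau$ has the form $U=\bigcup_{j}\bigcap_{i=1}^{n_j}V_{ij}$ with each $V_{ij}\in\bigcup_{e\in E}\s_e$. Each $(V_{ij},E)$ belongs to $\widetilde{\bigcup}_{e\in E}\h(\s_e)$, hence to $T(\widetilde{\bigcup}_{e\in E}\h(\s_e))$. Using the commuting identities, $\widetilde{\bigcap}_{i=1}^{n_j}(V_{ij},E)=(\bigcap_{i}V_{ij},E)$ and then $\widetilde{\bigcup}_{j}(\bigcap_{i}V_{ij},E)=(U,E)$ remain in $T(\widetilde{\bigcup}_{e\in E}\h(\s_e))$, since a soft topology is closed under finite soft intersections and arbitrary soft unions. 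Thus $(U,E)\in T(\widetilde{\bigcup}_{e\in E}\h(\s_e))$ for every $U\in\tau$, which gives $\h(\tau)\widetilde{\subseteq}T(\widetilde{\bigcup}_{e\in E}\h(\s_e))$, and the two inclusions yield the claim. Alternatively, applying Lemma \ref{zlemma2} to the family $\{\h(\s_e):e\in E\}$ identifies $T(\widetilde{\bigcup}_{e\in E}\h(\s_e))$ with the soft topology generated by the finite soft intersections of these constant soft sets, reducing the task to the single ``unions of basic sets'' step.

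The step I expect to be the main obstacle is the reverse inclusion: one must verify that generating a soft topology from constant soft sets never introduces a non-constant soft set, and that the familiar ``unions of finite intersections of subbasic members'' description of $\tau$ transports faithfully through the correspondence $U\mapsto(U,E)$. Once the two commuting identities are in place, the remainder is routine bookkeeping.
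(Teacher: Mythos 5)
Your proof is correct, and although it runs on the same engine as the paper's argument --- the correspondence $U\mapsto(U,E)$ between crisp sets and constant soft sets, which commutes with finite intersections and arbitrary unions --- the organization is genuinely different. The paper invokes Lemma \ref{base(e)} to reduce everything to element-chasing on \emph{basic} soft open sets: it writes a basic member of $T(\widetilde{\bigcup}_{e\in E}\h(\s_e))$ as a finite intersection of subbasic constant soft sets, ``detaches'' the parameter set $E$ to land in the crisp generated topology, and then dismisses the reverse inclusion with ``a similar technique.'' You split the work asymmetrically: one inclusion is pure minimality (your $\h(\tau)$, with $\tau=T(\bigcup_{e\in E}\s_e)$, is a soft topology containing each $\h(\s_e)$, hence containing the soft topology they generate), with no element-chasing at all; the other transports the unions-of-finite-intersections description of $\tau$ through the correspondence. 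Your route buys rigor precisely where the paper is loose: the paper's opening assertion that an arbitrary member of $T(\widetilde{\bigcup}_{e\in E}\h(\s_e))$ is a finite intersection of subbasic sets is literally true only of basic members (which is why it must lean on the base lemma), and the concern you flag --- that generating a soft topology from constant soft sets might conceivably produce non-constant members --- is never addressed in the paper at all, whereas your minimality inclusion settles it outright by placing the generated soft topology inside $\h(\tau)$. What the paper's route buys in exchange is brevity and symmetry between the two directions, at the cost of leaving both the reduction step and the converse implicit.
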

\begin{proof}
The Lemma \ref{base(e)} reduces the task of working with basic soft open sets rather than soft open sets. Let $(B_0,E)\in T(\widetilde{\bigcup}_{e\in E}\h(\s_e))$. Then $(B_0,E)=\widetilde{\bigcap}_{i=1}^n(B_i,E)$ for $(B_i,E)\in\widetilde{\bigcup}\h(\s_e)$, and so $(B_0,E)=\widetilde{\bigcap}_{i=1}^n(B_i,E)$ such that $(B_i,E)\in\h(\s_e)$ for some $e\in E$. By Formula 2, one can detach $E$ from $(B_i,E)$ for $i=0,1,\cdots, n$, and get $B_0=\bigcap_{i=1}^nB_i$, where  $B_i\in\s_e$ for some $e\in E$. This implies that $B_0=\bigcap_{i=1}^nB_i$ for  $B_i\in\bigcup\s_e$. Therefore, by Formula 2, $(B_0,E)\in\h(T(\widetilde{\bigcup}\s_e))$. The reverse of the inclusion can be proved by a similar technique.
\end{proof}

The following example shows how the techniques in Definition 1 and the relations in Lemmas \ref{l1}-\ref{l2}  can be used in practice:
\begin{exam}\label{exam1}
Let $X=\{x_1,x_2,x_3\}$, $E=\{e_1,e_2\}$. Consider the soft topology on $X$, $$\s=\{\widetilde{\Phi}, (F_1,E), (F_2,E), (F_3,E), (F_4,E), \widetilde{X}\},$$ where
\begin{flalign*} 
(F_1,E)&=\{(e_1,\{x_1\}),(e_2,\emptyset)\},&&\\
(F_2,E)&=\{(e_1,\{x_1,x_2\}),(e_2,X)\},&&\\
(F_3,E)&=\{(e_1,\emptyset),(e_2,\{x_3\})\},\text{ and}&&\\
(F_4,E)&=\{(e_1,\{x_1\}),(e_2,\{x_3\})\}.&&
\end{flalign*}

The crisp topologies from $\s$ are 
\begin{center}
$\s_{e_1}=\{\emptyset, \{x_1\}, \{x_1,x_2\}, X\}$ and $\s_{e_2}=\{\emptyset, \{x_3\}, X\}$.
\end{center}

Applying the Formula 2, we obtain the following two soft topologies on $X$:
\begin{flalign*}
\h(\s_{e_1})&=\Big\{\widetilde{\Phi}, \{(e_1,\{x_1\}), (e_2,\{x_1\})\}, \{(e_1,\{x_1,x_2\}), (e_2,\{x_1,x_2\})\}, \widetilde{X}\Big\}&&\\ 
&=\Big\{\widetilde{\Phi}, (\{x_1\},E), (\{x_1,x_2\},E), \widetilde{X}\Big\}\text{ (more compactly) and}\\
\h(\s_{e_2})&=\Big\{\widetilde{\Phi}, \{(e_1,\{x_3\}), (e_2,\{x_3\})\}, \widetilde{X}\Big\}=\{\widetilde{\Phi}, (E,\{x_3\}), \widetilde{X})\}.&&
\end{flalign*}

From Lemma \ref{zlemma2}, we can naturally generate a soft topology $T$ on $X$ by the union of $\h(\s_{e_1})$ and $\h(\s_{e_2})$. That is,
$$T\Big(\widetilde{\bigcup}_{i=1}^2\h(\s_{e_i})\Big)=\Big\{\widetilde{\Phi}, (G_1,E), (G_2,E), (G_3,E), (G_4,E), \widetilde{X}\Big\},$$ where
\begin{flalign*}
(G_1,E)&=\{(e_1,\{x_1\}), (e_2,\{x_1\})\},&&\\
(G_2,E)&=\{(e_1,\{x_3\}), (e_2,\{x_3\})\},&&\\
(G_3,E)&=\{(e_1,\{x_1,x_2\}), (e_2,\{x_1,x_2\})\},\text{ and}&&\\
(G_4,E)&=\{(e_1,\{x_1,x_3\}), (e_2,\{x_1,x_3\})\}.&&
\end{flalign*}
The compact form of the above conclusion is
 $$T\Big(\widetilde{\bigcup}_{i=1}^2\h(\s_{e_i})\Big)=\Big\{\widetilde{\Phi}, (\{x_1\},E), (\{x_3\},E), (\{x_1, x_2\},E), (\{x_1, x_3\},E), \widetilde{X}\Big\}.$$

By applying the Formula 1, the next soft topology on $X$ will be obtained.
\begin{center}
$\T(\bs)=\T(\{\s_{e_1},\s_{e_2}\})=\Big\{\widetilde{\Phi}, (H_1,E), (H_2,E), \cdots, (H_{10},E), \widetilde{X}\Big\}$,
\end{center}
 where
 \vspace{-3mm}
\begin{flalign*}
(H_1,E)&=\{(e_1,\emptyset), (e_2,X)\},&&\\
(H_2,E)&=\{(e_1,\emptyset), (e_2,\{x_3\})\},&&\\
(H_3,E)&=\{(e_1,X), (e_2,\emptyset)\},&&\\
(H_4,E)&=\{(e_1,X), (e_2,\{x_3\})\},&&\\
(H_5,E)&=\{(e_1,\{x_1\}), (e_2,\emptyset)\},&&\\
(H_6,E)&=\{(e_1,\{x_1\}), (e_2,X)\},&&\\
(H_7,E)&=\{(e_1,\{x_1\}), (e_2,\{x_3\})\},&&\\
(H_8,E)&=\{(e_1,\{x_1, x_2\}), (e_2,\emptyset)\},&&\\
(H_9,E)&=\{(e_1,\{x_1, x_2\}), (e_2,X)\},\text{ and}&&\\
(H_{10},E)&=\{(e_1,\{x_1,x_2\}), (e_2,\{x_3\})\}.&&
\end{flalign*}

One can easily check that the above computations lead to the following observations:
\begin{itemize}
	\item $\s$ is a subcollection of $\T(\bs)$.
	\item  $\s$ is independent of $T\big(\widetilde{\bigcup}_{i=1}^2\h(\s_{e_i})\big)$.
	\item  $T\big(\widetilde{\bigcup}_{i=1}^2\h(\s_{e_i})\big)$  is independent of $\T(\bs)$.
\end{itemize}
\end{exam}

The relationships between the soft topologies on a common universe obtained by the methods described in this section is summarized in Diagram 1. 
\begin{figure}[h!]
\captionsetup{name=Diagram}
\centering
\begin{tikzcd}[row sep=5em,column sep=6em]
	&\s \arrow[d,"\text{produces}",sloped] \arrow[dl,"\widetilde{\supseteq}",sloped] \arrow[dr,"\text{independent}",sloped,dash] &
	\\
	\T(\bs) & \arrow[l,"generates",swap] \bs=\{\s_e:e\in E\} \arrow[r,"generates"] & \h(T(\widetilde{\bigcup}\bs)) 
\end{tikzcd}	
\caption[Diagram 1]{Relationships between different soft topologies}
\end{figure}
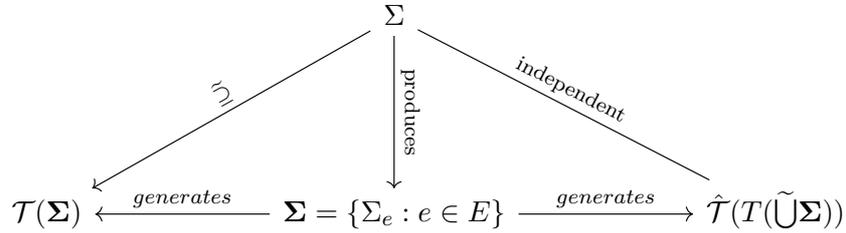

\section{Non-uniqueness of soft topology $\T(\bs)$ associated with $\s$}
In this short section, we provide an example to witness that two different (even incomparable) soft topologies may have a common associated soft topology.
\begin{exam}\label{exam3}
Consider the soft topology on $X$, $\s=\{\widetilde{\Phi}, (F_1,E), (F_2,E), (F_3,E), (F_4,E), \widetilde{X}\}$ given in Example \ref{exam1}, there $X=\{x_1,x_2,x_3\}$, $E=\{e_1,e_2\}$. Let $\s'=\{\widetilde{\Phi}, (R_1,E)$, $(R_2,E)$, $(R_3,E)$, $(R_4,E),\widetilde{X}\}$ be another soft topology on $X$, where 
\begin{flalign*}
(R_1,E)&=\{(e_1, \{x_1\}),(e_2,\emptyset)\},&&\\
(R_2,E)&=\{(e_1, \{x_1,x_2\}),(e_2,X)\},&&\\
(R_3,E)&=\{(e_1, X),(e_2,\{x_3\})\},\text{ and}&&\\
(R_4,E)&=\{(e_1, \{x_1,x_2\}),(e_2,\{x_3\})\}.&&
\end{flalign*}
Then $\s$ and $\s'$ are incomparable. Set $\hat{\s}=\s\widetilde{\bigcup}\s'$. Therefore,  $\hat{\s}$ is finer than both $\s$ and $\s'$. On the other hand, $\s, \s'$ and $\hat{\s}$ have the same family of crisp topologies $\bs=\{\s_{e_1},\s_{e_2}\}$, and thus they generate only one $\T(\bs)$.
\end{exam}

\section{Separation axioms preservation between $\bs$ and $\T(\bs)$}
With the exception of $\T(\s)$, a single set soft topology $\h(\s)$ generated by $\s$ inherits soft separation axioms after $\s$, according to Terepeta \cite{terepeta2019separating}. In this section, we use  $\T(\bs)$ to see how well separation axioms are preserved when moving from $\bs$ to $\T(\bs)$ and vice versa.

Here, we start defining special types of soft sets that exist when constructing a soft topology by using $\T(\bs)$ for future usage. 
\begin{defn}\label{zset}
Let $G,H\subset X$ and let $e\in E$, we define 
\begin{enumerate}[(i)]
\item $(F^G_e,E)$ to be a soft set over $X$ such that $F^G_e(e)=G$ and $F^G_e(e')=X$ for each $e'\ne e$.
\item $(F^e_H,E)$ to be a soft set over $X$ such that $F^e_H(e)=H$ and $F^e_H(e')=\emptyset$ for each $e'\ne e$, (see, \cite[Definition 5]{alcantud2020soft}).
\end{enumerate}
Note that $(F^G_e,E)^c=(F^e_H,E)$ if and only if $G^c=H$.
\end{defn}

\begin{thm}\label{thm1}
Let $\bs=\{\s_e:e\in E\}$ be a family of crisp topologies on $X$. If $\s_e$ is a $T_0$-space for some $e\in E$ then $\T(\bs)$ is a soft $T_0$-space.
\end{thm}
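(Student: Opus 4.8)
The plan is to unwind the definitions of soft $T_0$ and of Formula 1, reducing the soft separation statement to the crisp one that we are given. Suppose $\s_e$ is a crisp $T_0$-space for some fixed parameter $e\in E$. Take any two distinct points $x,y\in X$. Since $\s_e$ is $T_0$, there is a crisp open set $U\in\s_e$ that contains exactly one of them; without loss of generality assume $x\in U$ and $y\notin U$.

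The key step is to promote this single crisp open set to a soft open set in $\T(\bs)$ that witnesses the soft $T_0$ condition. The natural candidate is the soft set $(F,E)$ defined by $F(e)=U$ and $F(e')=X$ for every $e'\neq e$; in the notation of Definition \ref{zset} this is precisely $(F^U_e,E)$. I would verify that $(F^U_e,E)\in\T(\bs)$ by checking Formula 1: its value at the distinguished parameter $e$ is $U\in\s_e$, and its value at every other parameter $e'$ is $X\in\s_{e'}$, so the membership condition $F(e')\in\s_{e'}$ holds coordinatewise for all $e'\in E$. Hence $(F^U_e,E)$ is soft open.

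It then remains to confirm that this soft set separates $x$ and $y$ in the soft sense. By the definition of a soft point belonging to a soft set, $x\in(F^U_e,E)$ means $x\in F(e')$ for every $e'\in E$; this holds because $x\in U=F(e)$ and $x\in X=F(e')$ for all $e'\neq e$. Conversely, $y\notin(F^U_e,E)$ requires only that $y\notin F(e')$ for \emph{some} $e'\in E$, and indeed $y\notin U=F(e)$. Thus $x\in(F^U_e,E)$ and $y\notin(F^U_e,E)$, which is exactly one of the two disjunctive alternatives in the definition of soft $T_0$ (the first alternative in Definition \ref{T_i}(i), with the second soft open set unused). Since $x,y$ were arbitrary distinct points, $\T(\bs)$ is soft $T_0$.

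I expect no serious obstacle here; the proof is essentially a bookkeeping exercise. The only subtlety worth care is the asymmetry in the membership convention: belonging ($x\in(F,E)$) is an "all parameters" condition while non-belonging ($y\notin(F,E)$) is a "some parameter" condition. This asymmetry is precisely what makes padding the unused coordinates with $X$ (rather than $\emptyset$) the correct choice, since it guarantees $x$ stays inside the soft set while a single bad coordinate at $e$ already expels $y$. One should also note that only one parameter's being $T_0$ is needed, so no compatibility across parameters is required—this is what keeps the argument from demanding anything like the uniformity in Lemma \ref{te1=te2}.
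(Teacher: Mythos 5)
Your proof is correct and takes essentially the same route as the paper's: both arguments lift the crisp $T_0$ witness(es) in $\s_e$ to soft open sets of the form $(F^U_e,E)$ from Definition \ref{zset} (padding all other parameters with $X$) and then check the soft membership conditions directly. The only cosmetic difference is that you reduce to a single case by symmetry and spell out the verification that $(F^U_e,E)\in\T(\bs)$ and that the belongs/not-belongs conventions behave as needed, details the paper's terser proof leaves implicit.
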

\begin{proof}
Suppose that $\s_e$ is a $T_0$-space for some $e\in E$. Let $x,y\in X$ with $x\neq y$. Then there exist open sets $U,V\in \s_e$ such that $x\in U$, $y\notin U$ or $x\notin V$, $y\in V$. By Definition \ref{zset}, there exist two corresponding soft sets $(F^U_e,E), (F^V_e,E)\in \T(\bs)$ for which $x\in (F^U_e,E)$, $y\notin (F^U_e,E)$ or $x\notin (F^V_e,E)$, $y\in (F^V_e,E)$. Thus, $\T(\bs)$ is soft $T_0$.
\end{proof}

The following example shows that the converse of Theorem \ref{thm1} is not true in general.
\begin{exam}\label{ex1}
Let $X=\{x_1,x_2,x_3\}$ and let $\s_{e_1}=\{\emptyset, \{x_1\}, \{x_2,x_3\}, X\}$ and $\s_{e_2}=\{\emptyset, \{x_3\},$ $\{x_1,x_2\}, X\}$ be crisp topologies on $X$ indexed by $E=\{e_1,e_2\}$. By using the Formula 1, the following soft topology on $X$ will be obtained:
\begin{center}
	$\T(\bs)=\T(\{\s_{e_1},\s_{e_2}\})=\Big\{\widetilde{\Phi}, (H_1,E), (H_2,E), (H_3,E), \cdots, (H_{15},E), \widetilde{X}\Big\}$,
\end{center}
where
\vspace{-3mm}
\begin{flalign*}
	(H_1,E)&=\{(e_1,X), (e_2,\emptyset)\},&&\\
	(H_2,E)&=\{(e_1,\{x_1\}), (e_2,\emptyset)\},&&\\
	(H_3,E)&=\{(e_1,\{x_2,x_3\}), (e_2,\emptyset)\},&&\\
	(H_4,E)&=\{(e_1,\emptyset), (e_2,X)\},&&\\
	(H_5,E)&=\{(e_1,\{x_1\}), (e_2,X)\},&&\\
	(H_6,E)&=\{(e_1,\{x_2,x_3\}), (e_2,X)\},&&\\
	(H_7,E)&=\{(e_1,\emptyset), (e_2,\{x_1,x_2\})\},&&\\
	(H_8,E)&=\{(e_1,X), (e_2,\{x_1,x_2\})\},&&\\
	(H_9,E)&=\{(e_1,\{x_1\}), (e_2,\{x_1,x_2\})\},&&\\
	(H_{10},E)&=\{(e_1,\{x_1\}), (e_2,\{x_2\})\},&&\\
	(H_{11},E)&=\{(e_1,\{x_2,x_3\}), (e_2,\{x_1,x_2\})\},&&\\
	(H_{12},E)&=\{(e_1,\emptyset), (e_2,\{x_3\})\},&&\\
	(H_{13},E)&=\{(e_1,X), (e_2,\{x_3\})\},&&\\
	(H_{14},E)&=\{(e_1,\{x_1\}), (e_2,\{x_3\})\},\text{ and}&&\\
	(H_{15},E)&=\{(e_1,\{x_2,x_3\}), (e_2,\{x_3\})\}.&&
\end{flalign*}
Easily one can check that $\T(\bs)$ is soft $T_0$. On the other hand, neither of $\s_{e_1}$ nor $\s_{e_2}$ is $T_0$.
\end{exam}

\begin{thm}\label{thm2}
Let $\bs=\{\s_e:e\in E\}$ be a family of crisp topologies on $X$. If $\s_e$ is a $T_1$-space for some $e\in E$ then $\T(\bs)$ is a soft $T_1$-space.
\end{thm}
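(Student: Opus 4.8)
The plan is to mirror the proof of Theorem \ref{thm1} exactly, upgrading the disjunctive ``or'' in the $T_0$ separation to the conjunctive ``and'' demanded by soft $T_1$. Suppose $\s_e$ is a $T_1$-space for some fixed $e\in E$, and let $x,y\in X$ with $x\neq y$. By $T_1$-ness of $\s_e$, there exist open sets $U,V\in\s_e$ with $x\in U$, $y\notin U$ \emph{and} $x\notin V$, $y\in V$. The key construction is again to lift these crisp open sets to soft open sets in $\T(\bs)$ using the soft sets of Definition \ref{zset}.

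First I would form the soft sets $(F^U_e,E)$ and $(F^V_e,E)$, which by part (i) of Definition \ref{zset} take value $U$ (respectively $V$) at the parameter $e$ and value $X$ at every other parameter. Since $U,V\in\s_e$ and $X\in\s_{e'}$ for every $e'\neq e$ (as $X$ is the whole space and hence open in each crisp topology), Formula 1 guarantees $(F^U_e,E),(F^V_e,E)\in\T(\bs)$. Next I would verify the membership relations for soft points. Recall $x\in(F,E)$ means $x\in F(e')$ for \emph{every} $e'\in E$, while $x\notin(F,E)$ means $x\notin F(e')$ for \emph{some} $e'\in E$. Since $F^U_e(e')=X\ni x$ for all $e'\neq e$ and $F^U_e(e)=U\ni x$, we get $x\in(F^U_e,E)$; and since $y\notin U=F^U_e(e)$, we get $y\notin(F^U_e,E)$. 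Symmetrically, $y\in(F^V_e,E)$ and $x\notin(F^V_e,E)$.

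Combining these, $(F^U_e,E)$ and $(F^V_e,E)$ are soft open sets witnessing $x\in(F^U_e,E)$, $y\notin(F^U_e,E)$ and $x\notin(F^V_e,E)$, $y\in(F^V_e,E)$, which is precisely condition (ii) of Definition \ref{T_i}. Hence $\T(\bs)$ is soft $T_1$.

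The argument presents no genuine obstacle; the only subtlety worth attention is the asymmetry in the soft-point membership conventions, namely that belonging requires the inclusion at \emph{all} parameters whereas non-belonging requires failure at merely \emph{one}. The choice of $(F^G_e,E)$ (padded with $X$ at the off-parameters) is exactly what makes belonging work, since the padding $X$ never obstructs $x\in(F^U_e,E)$; the separation is then carried entirely by the single coordinate $e$. I would emphasize this point to distinguish the construction from the alternative soft set $(F^e_H,E)$ of Definition \ref{zset}, which pads with $\emptyset$ and would instead be the natural vehicle for arguments phrased in terms of soft closed sets.
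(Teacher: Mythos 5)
Your proposal is correct and follows exactly the route the paper intends: the paper's own proof of Theorem \ref{thm2} simply states that it is analogous to the proof of Theorem \ref{thm1}, i.e., lifting the crisp $T_1$ witnesses $U,V\in\s_e$ to the soft open sets $(F^U_e,E),(F^V_e,E)$ of Definition \ref{zset}(i). Your explicit verification of the soft-point membership conventions (membership at all parameters versus non-membership at one) fills in precisely the details the paper leaves implicit.
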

\begin{proof}
It is entirely analogous to the first part of the proof of Theorem \ref{thm1}.
\end{proof}

The following example shows that the converse of Theorem \ref{thm2} is not true in general. It also refutes Theorem 3.5 in \cite{goccur2015soft}:
\begin{exam}\label{exam2}
Let $X=\{x_1,x_2\}$ and let $\s_{e_1}=\{\emptyset, \{x_1\}, X\}$ and $\s_{e_2}=\{\emptyset, \{x_2\}, X\}$ be crisp topologies on $X$ indexed by $E=\{e_1,e_2\}$. By using the Formula 1, the following soft topology on $X$ will be obtained:
	\begin{center}
		$\T(\bs)=\T(\{\s_{e_1},\s_{e_2}\})=\Big\{\widetilde{\Phi}, (H_1,E), (H_2,E), (H_3,E), (H_4,E), (H_5,E), (H_6,E), (H_7,E), \widetilde{X}\Big\}$,
	\end{center}
	where
	\vspace{-3mm}
	\begin{flalign*}
		(H_1,E)&=\{(e_1,\emptyset), (e_2,X)\},&&\\
		(H_2,E)&=\{(e_1,X), (e_2,\emptyset)\},&&\\
		(H_3,E)&=\{(e_1,\emptyset), (e_2,\{x_2\})\},&&\\
		(H_4,E)&=\{(e_1,X), (e_2,\{x_2\})\},&&\\
		(H_5,E)&=\{(e_1,\{x_1\}), (e_2,\emptyset)\},&&\\
		(H_6,E)&=\{(e_1,\{x_1\}), (e_2,X)\}, \text{ and}&&\\
		(H_7,E)&=\{(e_1,\{x_1\}), (e_2,\{x_2\})\}.&&
	\end{flalign*}
Then $(H_4,E)$ and $(H_6,E)$ are soft open sets in $\T(\bs)$ such that $x_1\in(H_6,E)$, $x_2\notin(H_6,E)$ and $x_2\in(H_4,E)$, $x_1\notin(H_4,E)$. Thus, $\T(\bs)$ is soft $T_1$. On the other hand, neither of $\s_{e_1}$ nor $\s_{e_2}$ is $T_1$.
\end{exam}

\begin{thm}\label{thm3}
Let $\bs=\{\s_e:e\in E\}$ be a family of crisp topologies on $X$. Then $\s_e$ is a $T_2$-space for each $e\in E$ if and only if $\T(\bs)$ is a soft $T_2$-space.
\end{thm}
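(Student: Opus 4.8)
The plan is to prove the biconditional in Theorem~\ref{thm3} by establishing each implication separately, with the forward direction being routine and the reverse direction requiring more care. For the direction asserting that if each $\s_e$ is $T_2$ then $\T(\bs)$ is soft $T_2$, I would take distinct points $x,y\in X$ and, for each fixed $e\in E$, use the Hausdorff property of $\s_e$ to obtain disjoint open sets $U_e,V_e\in\s_e$ with $x\in U_e$ and $y\in V_e$. The key observation is that Formula~1 allows the coordinates to be chosen \emph{independently} across parameters, so I can assemble the soft sets $(U,E)$ and $(V,E)$ defined by $U(e)=U_e$ and $V(e)=V_e$; both lie in $\T(\bs)$ by definition. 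Then $x\in(U,E)$ and $y\in(V,E)$, and since $U_e\cap V_e=\emptyset$ for every $e$, their soft intersection is $\widetilde\Phi$. This yields soft $T_2$.

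For the converse, suppose $\T(\bs)$ is soft $T_2$ and fix an arbitrary $e_0\in E$; I must show $\s_{e_0}$ is $T_2$. The natural approach is to take distinct $x,y\in X$, apply soft Hausdorffness in $\T(\bs)$ to get disjoint soft open $(U,E),(V,E)$ with $x\in(U,E)$, $y\in(V,E)$, and then read off the $e_0$-coordinate: set $U=U(e_0)$ and $V=V(e_0)$. By Lemma~\ref{crisp} (or directly from Formula~1) these are open in $\s_{e_0}$. Since $x\in(U,E)$ means $x\in U(e)$ for all $e$, in particular $x\in U(e_0)$, and likewise $y\in V(e_0)$. The disjointness condition $(U,E)\widetilde\cap(V,E)=\widetilde\Phi$ forces $U(e)\cap V(e)=\emptyset$ for \emph{every} $e$, so in particular $U(e_0)\cap V(e_0)=\emptyset$. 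Hence $\s_{e_0}$ separates $x$ and $y$, and since $e_0$ was arbitrary every $\s_e$ is $T_2$.

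The step I expect to be the main obstacle — or at least the one demanding the most attention — is making sure the separating soft sets in the forward direction actually lie in $\T(\bs)$ and that the witnesses in each coordinate can be glued without any compatibility constraint across parameters. This is precisely where Formula~1 (as opposed to Formula~2) is essential: because $F(e)$ may be chosen freely in $\s_e$ for each $e$ independently, there is no obstruction to patching the coordinatewise separators into a single soft open set. By contrast, the reverse direction is where the ``for each $e$'' quantifier (rather than the ``for some $e$'' appearing in Theorems~\ref{thm1} and~\ref{thm2}) becomes necessary, since soft disjointness is a condition holding simultaneously at every parameter, so a single soft separation automatically delivers a crisp separation at every $e$ at once. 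I would emphasize this asymmetry with $T_0$ and $T_1$ as the conceptual heart of why $T_2$ upgrades to a genuine equivalence.
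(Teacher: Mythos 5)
Your proposal is correct and follows essentially the same route as the paper's proof: in the forward direction you glue coordinatewise Hausdorff separators into soft open sets via Formula~1, and in the converse you read off the $e$-coordinates of a soft separation, noting that soft disjointness forces disjointness at every parameter (your parenthetical ``directly from Formula~1'' is indeed the right justification for openness of $U(e_0)$ in $\s_{e_0}$, rather than Lemma~\ref{crisp}). The only difference is expository: your closing discussion of why $T_2$ yields a genuine equivalence while $T_0$ and $T_1$ do not is a nice remark, but it is not part of the paper's argument.
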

\begin{proof}
Assume that $\s_e$ is $T_2$ for each $e\in E$. Let $x,y\in X$ with $x\neq y$. Then, for each $e$, there exist open sets $U(e),V(e)\in \s_e$ such that $x\in U(e)$, $y\in V(e)$ and $U(e)\cap V(e)=\emptyset$. Set $(U,E)=\{(e,U(e)):e\in E\}$ and $(V,E)=\{(e,V(e)):e\in E\}$. So $(U,E), (V,E)\in\T(\bs)$ such that  $x\in(U,E)$, $y\in(V,E)$ and $(U,E)\widetilde{\bigcap}(V,E)=\{(e, U(e)\cap V(e)): e\in E\}=\widetilde{\Phi}$. Hence, $\T(\bs)$ is soft $T_2$.
	
Conversely, let $x,y\in X$ with $x\neq y$. Suppose that $\T(\bs)$ is soft $T_2$, then exists a soft open set $(G,E), (H,E)$ such that $x\in (G,E)$, $y\in (H,E)$ and $(G,E)\widetilde{\bigcap}(H,E)=\widetilde{\Phi}$. This means that, for each $e\in E$,  $x\in G(e)$, $y\in H(e)$ and $G(e)\cap H(e)=\emptyset$. Thus, $\s_e$ is $T_2$ for each $e\in E$.
\end{proof}

\begin{cor}\label{corthm3}
If $(X,\s,E)$ is a soft $T_2$-space, then $\s_e$ is $T_2$ for each $e\in E$.
\end{cor}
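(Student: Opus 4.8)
The plan is to establish Corollary \ref{corthm3} as a direct consequence of Theorem \ref{thm3}, which is the natural strategy since the corollary follows the theorem and concerns exactly the same separation axiom. The key observation is that the family of crisp topologies $\bs=\{\s_e:e\in E\}$ extracted from $\s$ (via Lemma \ref{crisp}) relates $\s$ and $\T(\bs)$ through Lemma \ref{l1}, which gives $\s\,\widetilde{\subseteq}\,\T(\bs)$. So I would first recall that by Lemma \ref{crisp} each $\s_e=\{F(e):(F,E)\in\s\}$ is indeed a crisp topology on $X$, so the hypotheses of Theorem \ref{thm3} are meaningful for this particular $\bs$.

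The core of the argument is a monotonicity remark: if $\s$ is soft $T_2$, then any soft topology \emph{finer} than $\s$ is also soft $T_2$, because the same separating soft open sets $(U,E),(V,E)\in\s$ remain available in the larger collection $\T(\bs)$. First I would invoke Lemma \ref{l1} to get $\s\,\widetilde{\subseteq}\,\T(\bs)$, and then argue that soft $T_2$-ness passes to finer soft topologies. Concretely, given $x,y\in X$ with $x\neq y$, the soft Hausdorff property of $\s$ supplies disjoint soft open sets in $\s$ separating $x$ and $y$; since these sets also belong to $\T(\bs)$, the space $\T(\bs)$ is soft $T_2$ as well. Once $\T(\bs)$ is known to be soft $T_2$, the reverse direction of Theorem \ref{thm3} immediately yields that $\s_e$ is $T_2$ for each $e\in E$, which is exactly the conclusion.

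I do not expect any serious obstacle here; the statement is genuinely a corollary and the work has already been done in Theorem \ref{thm3}. The only point requiring a moment's care is the monotonicity step, namely verifying that the separating soft open sets of $\s$ are literally elements of $\T(\bs)$ and that disjointness (defined via $\widetilde{\bigcap}=\widetilde{\Phi}$) and the membership relation $x\in(U,E)$ are unaffected by enlarging the ambient soft topology. This is routine because both notions are defined pointwise on the parameter set $E$ and do not reference the topology at all, so they are preserved verbatim. Thus the entire proof reduces to chaining Lemma \ref{l1} with the converse implication of Theorem \ref{thm3}, and I would write it in just a few lines.
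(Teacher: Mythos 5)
Your proposal is correct and follows exactly the paper's route: the paper proves this corollary as an immediate consequence of Lemma \ref{l1} and Theorem \ref{thm3}, which is precisely your chain of reasoning. Your spelled-out monotonicity step (soft $T_2$-ness passes from $\s$ to the finer $\T(\bs)$, after which the converse direction of Theorem \ref{thm3} applies) is just the detail the paper leaves implicit in the word ``immediate.''
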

\begin{proof}
It is an immediate consequence of Lemma \ref{l1} and Theorem \ref{thm3}.
\end{proof}

Notice that Theorem \ref{thm3} and Corollary \ref{corthm3} generalize (part of) Theorem 4 in \cite{terepeta2019separating} and Proposition 17 in \cite{shabir2011soft}, respectively.

\begin{lem}\cite[Theorem 3]{terepeta2019separating}\label{closed=softclosed}
	Let $\s$ be a (crisp) topology on a set $X$. A soft set $(F,E)$ is soft closed in $\T(\s)$ if and only if $(F,E)=\{(e,F(e)):F^c(e)\in \s\}$.
\end{lem}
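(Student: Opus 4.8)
The plan is to prove both directions of the biconditional characterizing soft closed sets in $\T(\s)$, where $\s$ is a single crisp topology on $X$ and $\T(\s)=\T(\bs)$ arises from Formula 1 with $\s_e=\s$ for every $e\in E$. Recall that a soft set is soft closed exactly when its complement is soft open, and by Definition \ref{defn1} a soft set $(G,E)=\{(e,G(e)):e\in E\}$ lies in $\T(\s)$ if and only if $G(e)\in\s$ for every $e\in E$. So the whole statement should reduce to unwinding complementation coordinatewise.

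For the forward direction, I would start with a soft closed set $(F,E)$ in $\T(\s)$. By definition of soft closedness, its complement $(F,E)^c=(F^c,E)=\{(e,X\backslash F(e)):e\in E\}$ is soft open, i.e. belongs to $\T(\s)$. Applying Formula 1 to $(F^c,E)$ gives $F^c(e)=X\backslash F(e)\in\s$ for each $e\in E$. This is precisely the assertion that $(F,E)=\{(e,F(e)):F^c(e)\in\s\}$, reading the set-builder condition as ``$F(e)$ ranges over those values whose complement is open.'' For the converse, I would assume $(F,E)$ satisfies $F^c(e)\in\s$ for each $e$, form the complement $(F^c,E)$, observe it satisfies the Formula 1 membership condition, hence $(F^c,E)\in\T(\s)$, so $(F,E)$ is soft closed by definition.

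The main thing to be careful about is not a deep obstacle but a notational one: the set-builder expression $\{(e,F(e)):F^c(e)\in\s\}$ in the statement should be interpreted so that the ranging is over all $e\in E$ with the side condition $F^c(e)\in\s$ holding for each such $e$, rather than as a selection that discards some parameters. I would make this reading explicit and then note that the argument is symmetric in the two directions, since complementation is an involution ($\widetilde{X}^c=\widetilde{\Phi}$ and coordinatewise $X\backslash(X\backslash F(e))=F(e)$). Because everything decomposes parameter by parameter and only uses the coordinatewise definitions of complement and of $\T(\s)$, I expect no genuinely hard step; the proof is essentially a direct translation between the membership condition of Formula 1 and the closedness condition via complements.
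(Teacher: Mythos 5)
Your proof is correct. Note that the paper itself offers no proof of this lemma --- it is quoted verbatim as Theorem 3 of Terepeta's paper \cite{terepeta2019separating} --- so there is nothing in-paper to compare against; your coordinatewise unwinding (soft closed $\Leftrightarrow$ $(F^c,E)\in\T(\s)$ $\Leftrightarrow$ $F^c(e)\in\s$ for every $e\in E$) is exactly the intended, and essentially the only, argument. One small remark on the notational worry you raise: even under the literal reading of $\{(e,F(e)):F^c(e)\in\s\}$ as a soft set over the parameter subset $E'=\{e\in E: F^c(e)\in\s\}$, the paper's definition of soft equality (mutual soft inclusion, which requires $E\subseteq E'$ and $E'\subseteq E$) forces $E'=E$, so the equality in the statement is equivalent to your intended reading that $F^c(e)\in\s$ holds for all $e\in E$; hence your interpretation introduces no gap.
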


\begin{thm}\label{thm4}
Let $\bs=\{\s_e:e\in E\}$ be a family of crisp topologies on $X$. If $\T(\bs)$ is a soft regular space, then $\s_e$ is a regular space for each $e\in E$.
\end{thm}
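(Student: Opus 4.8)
The plan is to prove the contrapositive-style implication directly: assuming $\T(\bs)$ is soft regular, fix an arbitrary $e\in E$ and show $\s_e$ is regular in the crisp sense. To do this I would start with a crisp closed set $F$ in $\s_e$ and a point $x\in X$ with $x\notin F$, and manufacture from them a soft closed set and a soft point in $\T(\bs)$ so that soft regularity can be applied. The natural device is the special soft sets from Definition \ref{zset}: take $(F^e_F, E)$, whose only nontrivial value is $F$ at the coordinate $e$ and $\emptyset$ elsewhere. Since $F$ is $\s_e$-closed (so $F^c\in\s_e$) and $\emptyset$ is closed in every $\s_{e'}$, the complement $(F^e_F,E)^c = (F^{F^c}_e,E)$ lies in $\T(\bs)$ by Formula 1, so $(F^e_F,E)$ is soft closed. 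The soft point $x$ (i.e.\ $(\{x\},E)$) satisfies $x\notin(F^e_F,E)$ precisely because $x\notin F = F^e_F(e)$ at coordinate $e$.

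Next I would invoke soft regularity of $\T(\bs)$ to separate $(\{x\},E)$ from the soft closed set $(F^e_F,E)$: there exist soft open sets $(U,E),(V,E)\in\T(\bs)$ with $x\in(U,E)$, $(F^e_F,E)\widetilde{\subseteq}(V,E)$, and $(U,E)\widetilde{\cap}(V,E)=\widetilde{\Phi}$. The key step is then to \emph{read off the $e$-coordinate}. By Formula 1, $U(e),V(e)\in\s_e$. From $x\in(U,E)$ we get $x\in U(e)$. From $(F^e_F,E)\widetilde{\subseteq}(V,E)$ we get $F = F^e_F(e)\subseteq V(e)$. And the disjointness $(U,E)\widetilde{\cap}(V,E)=\widetilde{\Phi}$ forces $U(e)\cap V(e)=\emptyset$ coordinatewise. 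These three facts are exactly the statement that $U(e)$ and $V(e)$ are disjoint open sets in $\s_e$ separating $x$ from $F$, so $\s_e$ is regular. Since $e$ was arbitrary, regularity holds for every $e\in E$.

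I expect the main (though modest) obstacle to be the correct bookkeeping at the coordinate $e'\ne e$, and more importantly justifying that $(F^e_F,E)$ is genuinely soft closed. One must check that placing $\emptyset$ in all coordinates $e'\ne e$ (rather than $X$) is the right choice so that the complement $(F^{F^c}_e,E)$ has value $F^c\in\s_e$ at $e$ and $X\in\s_{e'}$ elsewhere, landing inside $\T(\bs)$; this is precisely the complementation relation recorded at the end of Definition \ref{zset}, namely $(F^G_e,E)^c=(F^e_H,E)$ iff $G^c=H$. A secondary subtlety is that soft regularity as stated separates a soft \emph{point} $(\{x\},E)$ from a soft closed set, and one should confirm the hypotheses $x\notin(F^e_F,E)$ and the non-vacuity of the separation are met; but since the whole argument is localized to a single coordinate $e$, once the soft closedness of $(F^e_F,E)$ is settled, the remaining implications are immediate coordinate-wise restrictions requiring no further topological input.
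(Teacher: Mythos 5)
Your proof is correct, but it takes a genuinely different route from the paper's. You manufacture the soft closed set directly from Definition \ref{zset}: $(F^e_F,E)$ with value $F$ at the coordinate $e$ and $\emptyset$ elsewhere, whose complement $(F^{F^c}_e,E)$ visibly lies in $\T(\bs)$ by Formula 1; you then apply soft regularity and read off the $e$-th coordinate of the separating soft open sets. The paper instead first invokes Lemma \ref{te1=te2} (soft regularity of $\T(\bs)$ forces $\s_e=\s_{e'}$ for all $e,e'\in E$), which collapses $\T(\bs)$ to $\T(\s)$ for a single crisp topology $\s$, and then uses Terepeta's characterization of soft closed sets in $\T(\s)$ (Lemma \ref{closed=softclosed}) to produce its soft closed set, finishing with the same coordinatewise reading. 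Your approach buys self-containment: it requires neither of those two external lemmas, works uniformly over an arbitrary family $\bs$ without any collapsing step, and is in fact the same device the paper itself employs later in its Remark on normality (the sets $(F^{\bar e}_A,E)$ and $(F^B_{\bar e},E)$). What the paper's route buys is the explicit structural byproduct that, under soft regularity, all the crisp topologies $\s_e$ coincide, so establishing regularity at one coordinate automatically gives it at all of them; your argument instead handles each coordinate on its own, which is arguably cleaner.
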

\begin{proof}
Let $e\in E$. Take $x\in X$ and $F(e)$ be a closed set in $(X,\s_e)$ such that $x\notin F(e)$. The Definition \ref{defn1} and Lemma \ref{te1=te2} tell us that soft regularity of $\T(\bs)$ guarantees the equality of $\T(\bs)=\T(\s)$. Set $(F,E)=\{(e,F(e)):F^c(e)\in \s\}$. By Lemma \ref{closed=softclosed}, $(F,E)$ is soft closed in $\T(\s)$ along with $x\notin(F,E)$. Since $\T(\s)$ is soft regular, then there exist soft open sets $(U,E), (V,E)$ in $\T(\s)$ such that $x\in (U,E), (F,E)\widetilde{\subseteq}(V,E)$ and $\widetilde{\Phi}=(U,E)\widetilde{\bigcap}(V,E)=\{(e,U(e)\cap V(e)): e\in E\}$. This implies that $x\in U(e)$, $F(e)\subseteq V(e)$ and $U(e)\cap V(e)=\emptyset$ for each $e\in E$. Since $U(e), V(e)\in\s_e$, then $\s_e$ is regular for each $e\in E$.
\end{proof}

\begin{cor}\label{corthm4}
If $(X,\s,E)$ is a soft regular space, then $\s_e$ is regular for each $e\in E$.
\end{cor}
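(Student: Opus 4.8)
The goal is to prove Corollary \ref{corthm4}: if $(X,\s,E)$ is a soft regular space, then $\s_e$ is regular for each $e\in E$. The plan is to exploit the fact that this corollary should be an immediate consequence of Theorem \ref{thm4} together with Lemma \ref{l1}, in exact parallel to how Corollary \ref{corthm3} was deduced from Theorem \ref{thm3} and Lemma \ref{l1}.

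First I would recall the setup. Let $\bs=\{\s_e:e\in E\}$ be the family of all crisp topologies from $\s$, so that by Lemma \ref{l1} we have $\s\,\widetilde{\subseteq}\,\T(\bs)$. Theorem \ref{thm4} runs in the direction ``$\T(\bs)$ soft regular $\Rightarrow$ each $\s_e$ regular,'' so the technical heart is already available; the only gap to bridge is the passage from soft regularity of $\s$ to soft regularity of the (a priori larger) associated topology $\T(\bs)$. Here I would invoke Lemma \ref{te1=te2}: soft regularity of $\s$ forces $\s_e=\s_{e'}$ for all $e,e'\in E$, so the family $\bs$ is constant, say $\s_e=\s_0$ for every $e$. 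By the final clause of Definition \ref{defn1}, this collapses the associated topology to $\T(\bs)=\T(\s_0)$, the single-topology Formula~1 construction.

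The main step is then to argue that $\s=\T(\s_0)$ under this hypothesis. Since $\s\,\widetilde{\subseteq}\,\T(\s_0)$ by Lemma \ref{l1}, it suffices to show the reverse inclusion. Given any $(F,E)\in\T(\s_0)$, each coordinate $F(e)\in\s_0=\s_e$, meaning $F(e)$ is a section of some member of $\s$; I would then show that soft regularity (via the ``one-point'' soft sets of Definition \ref{zset} and the closed-set characterization of Lemma \ref{closed=softclosed}) lets one assemble these coordinate-wise open sets into an element of $\s$ itself, whence $\T(\s_0)\,\widetilde{\subseteq}\,\s$. Once $\s=\T(\bs)$ is established, soft regularity of $\s$ is literally soft regularity of $\T(\bs)$, and Theorem \ref{thm4} delivers regularity of every $\s_e$, completing the proof.

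The step I expect to be the main obstacle is precisely showing $\T(\s_0)\,\widetilde{\subseteq}\,\s$, i.e.\ that soft regularity is strong enough to force $\s$ to coincide with its associated topology rather than merely being contained in it. If that equality turns out to be unavailable or to require more than soft regularity, the fallback is to prove the corollary directly: fix $e\in E$, take $x\in X$ and a $\s_e$-closed $F(e)$ with $x\notin F(e)$, lift $F(e)$ to the soft closed set $(F,E)$ whose $e$-section is $F(e)$ and whose other sections are chosen closed (using Lemma \ref{te1=te2} so that all $\s_{e'}$ agree), apply soft regularity of $\s$ to separate $x$ from $(F,E)$ by soft open $(U,E),(V,E)$, and read off the $e$-coordinates $U(e),V(e)\in\s_e$ to separate $x$ from $F(e)$ in $(X,\s_e)$. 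This direct route mirrors the coordinate-extraction argument already used inside the proof of Theorem \ref{thm4} and avoids needing the full equality $\s=\T(\bs)$.
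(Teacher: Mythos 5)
Your primary route is broken, and the obstacle you flagged yourself is fatal. Soft regularity of $\s$ does give $\s_e=\s_{e'}$ for all $e,e'\in E$ (Lemma \ref{te1=te2}), but it gives neither $\T(\bs)\widetilde{\subseteq}\s$ nor even soft regularity of $\T(\bs)$. A counterexample sits in the paper itself, in the singleton example placed right after this corollary: for $X=\{x\}$, $E=\{e_1,e_2\}$, the indiscrete soft topology $\s=\{\widetilde{\Phi},\widetilde{X}\}$ is soft regular and has $\s_{e_1}=\s_{e_2}=\{\emptyset,X\}$, yet $\T(\bs)$ additionally contains $\{(e_1,\emptyset),(e_2,X)\}$ and $\{(e_1,X),(e_2,\emptyset)\}$, so $\s\neq\T(\bs)$, and the paper verifies that this $\T(\bs)$ is \emph{not} soft regular. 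Hence no argument can reduce the corollary to Theorem \ref{thm4} by transferring regularity from $\s$ up to the finer $\T(\bs)$: regularity, unlike Hausdorffness, does not ascend to finer soft topologies, which is exactly why the analogy with Corollary \ref{corthm3} (where the finer topology inherits soft $T_2$ and Theorem \ref{thm3} applies) breaks down here. Notably, the paper's own one-line proof --- ``it can be concluded from Lemma \ref{l1} and Theorem \ref{thm4}'' --- suffers from this very gap as stated, so your suspicion was well placed.

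Your fallback, by contrast, is the argument that actually proves the corollary, and it needs only one repair. The lifted soft set must be soft closed in $\s$ itself, since it is the soft regularity of $\s$ you invoke; you cannot prescribe its sections coordinatewise (``other sections chosen closed''), because a soft set whose sections are all closed need only be soft closed in the associated topology $\T(\bs)$ (cf.\ Lemma \ref{closed=softclosed}), not in $\s$. The correct lift: given $e\in E$, $x\in X$ and $C$ closed in $\s_e$ with $x\notin C$, note $X\setminus C\in\s_e$, so by the definition of $\s_e$ there is $(G,E)\in\s$ with $G(e)=X\setminus C$; put $(F,E)=(G,E)^c$, which is soft closed in $\s$, has $F(e)=C$, and satisfies $x\notin(F,E)$ because membership fails at the coordinate $e$ (which suffices under the paper's definition of $\notin$). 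Soft regularity of $\s$ yields disjoint soft open $(U,E),(V,E)\in\s$ with $x\in(U,E)$ and $(F,E)\widetilde{\subseteq}(V,E)$, and reading off the $e$-coordinates gives $x\in U(e)$, $C\subseteq V(e)$, $U(e)\cap V(e)=\emptyset$ with $U(e),V(e)\in\s_e$. Also note that Lemma \ref{te1=te2} is not needed anywhere in this argument: the remaining sections of $(G,E)^c$ are automatically closed in their respective $\s_{e'}$. With these adjustments your fallback is a complete, self-contained proof --- and a sounder one than the route the paper indicates.
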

\begin{proof}
It can be concluded from Lemma \ref{l1} and Theorem \ref{thm4}.
\end{proof}

\begin{rem}
We shall mention that it is observed in Remark 3.23 (2') \cite{min2011note} that if $(X,\s,E)$ is a soft $T_3$ space, then $\s_e$ is $T_3$ for each $e\in E$. This conclusion is more general than Corollary \ref{corthm4}, but it cannot be followed from any of our results due to Example \ref{exam2}.
\end{rem}

The examples given below disprove the reverse of theorem \ref{thm4}:
\begin{exam}
Let $X=\{x\}$, let $E=\{e_1,e_2\}$, and let $\bs=\{\s_{e_1},\s_{e_2}\}$, where $\s_{e_1}=\s_{e_2}=\{\emptyset, X\}$. One can check that each $\s_{e_i}$ is trivially a regular space. On the other hand, the soft topology $\T(\bs)=\{\widetilde{\Phi}, (F_1,E), (F_2,E), \widetilde{X}\}$ is not soft regular, where $(F_1,E)=\{(e_1, \emptyset), (e_2, X)\}$ and $(F_2,E)=\{(e_1, X), (e_2, \emptyset)\}$. Indeed, $x\notin (F_i,E)^c$ for each $i$, but no soft open sets in $\T(\bs)$ can separate them.
\end{exam}

A less trivial example is following:

\begin{exam}
Let $X=\mathbb{R}$ be the set of reals, let $E=\{e_1,e_2\}$, and let $\bs=\{\s_{e_1},\s_{e_2}\}$, where $\s_{e_1}$ is the natural topology and $\s_{e_2}$ is the Sorgenfrey line on $\mathbb{R}$. It is known that both $\s_{e_1}$ and $\s_{e_2}$ are regular spaces, while $\T(\bs)$ is not soft regular. Take $x\in\mathbb{R}$ and two closed sets $[a,b], [c,d)$ for which $x\in[a,b]$ and $x\notin[c,d)$. If $(F,E)=\{(e_1, [a,b]), (e_2, [c,d))\}$, then $(F,E)$ is a soft closed set in $\T(\bs)$ such that $x\notin (F,E)$. One can easily check that $x$ and $(F,E)$ cannot be separated by soft open sets.
\end{exam}
We shall admit that the idea of this example is due to Terepeta \cite[Example 8]{terepeta2019separating}.

\begin{thm}\label{thm5}
Let $\bs=\{\s_e:e\in E\}$ be a family of crisp topologies on $X$. Then $\s_e$ is a normal space for each $e\in E$ if and only if $\T(\bs)$ is a soft normal space.
\end{thm}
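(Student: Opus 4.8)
The plan is to prove the two implications directly, reusing the coordinatewise mechanism that worked for soft $T_2$ in Theorem \ref{thm3} but with disjoint closed sets in place of distinct points. The first thing I would record is a description of the soft closed sets of $\T(\bs)$: since the complement of $(F,E)$ satisfies $F^c(e)=X\setminus F(e)$, Formula 1 (Definition \ref{defn1}) shows that $(F,E)$ is soft closed in $\T(\bs)$ precisely when $F(e)$ is closed in $\s_e$ for every $e\in E$. This is the $\bs$-analogue of Lemma \ref{closed=softclosed} and follows at once from the definitions.

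For the forward implication, assume each $\s_e$ is normal and let $(F,E),(D,E)$ be disjoint soft closed sets in $\T(\bs)$. By the description above, $F(e)$ and $D(e)$ are closed in $\s_e$, and soft disjointness forces $F(e)\cap D(e)=\emptyset$ for every $e$. Applying normality of $\s_e$ coordinatewise yields $U(e),V(e)\in\s_e$ with $F(e)\subseteq U(e)$, $D(e)\subseteq V(e)$, $U(e)\cap V(e)=\emptyset$, and the soft sets $(U,E)=\{(e,U(e)):e\in E\}$, $(V,E)=\{(e,V(e)):e\in E\}$ are soft open by Formula 1 and separate $(F,E),(D,E)$, since $(U,E)\widetilde{\cap}(V,E)=\{(e,U(e)\cap V(e)):e\in E\}=\widetilde{\Phi}$.

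For the converse, suppose $\T(\bs)$ is soft normal, fix $e\in E$, and let $A,B$ be disjoint closed sets in $\s_e$. The crucial step is to lift $A$ and $B$ to soft closed sets that remain disjoint; for this I would use the sets of Definition \ref{zset}, namely $(F^e_A,E)$ and $(F^e_B,E)$ (value $A$, resp. $B$, at $e$ and $\emptyset$ elsewhere). Their complements are $(F^{X\setminus A}_e,E)$ and $(F^{X\setminus B}_e,E)$, which are soft open because $X\setminus A,X\setminus B\in\s_e$ and the remaining coordinate equals $X$; hence $(F^e_A,E),(F^e_B,E)$ are soft closed, and they are disjoint because $A\cap B=\emptyset$ while the off-coordinates are empty. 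Soft normality then supplies disjoint soft open sets $(U,E)\widetilde{\supseteq}(F^e_A,E)$ and $(V,E)\widetilde{\supseteq}(F^e_B,E)$, and reading the $e$-th coordinate gives $A\subseteq U(e)$, $B\subseteq V(e)$, $U(e)\cap V(e)=\emptyset$ with $U(e),V(e)\in\s_e$, so $\s_e$ is normal. The main obstacle is exactly this lifting: the filling by $\emptyset$ on the off-coordinates is what simultaneously makes the complements soft open and keeps the lifted sets soft disjoint, whereas filling by $X$ (as one does when lifting points in the $T_0$/$T_1$ arguments) would destroy disjointness; once the correct lifting is in hand, the remainder is a routine projection to the fixed coordinate.
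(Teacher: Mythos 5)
Your proposal is correct and takes essentially the same approach as the paper: the forward direction is the identical coordinatewise separation argument, preceded by the same observation that soft closed sets in $\T(\bs)$ are exactly those with $F(e)$ closed in $\s_e$ for every $e$. Your converse, which lifts a fixed pair $A,B$ at one coordinate with $\emptyset$ padding via Definition \ref{zset}, is just the single-coordinate instantiation of the paper's simultaneous assembly of disjoint closed pairs over all of $E$, and it coincides with the lifting the paper itself employs in the remark following Example \ref{milan2}.
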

\begin{proof}
Let $\s_e$ be normal for each $e\in E$. Suppose $(A,E), (B,E)$ are disjoint soft closed sets in $\T(\bs)$. Then $(A,E)=\{(e,A(e)):A^c(e)\in\s_e, e\in E\}$ and $(B,E)=\{(e,B(e)):B^c(e)\in\s_e, e\in E\}$. Therefore $\widetilde{\Phi}=(A,E)\widetilde{\bigcap}(B,E)=\{(e,A(e)\cap B(e)):A^c(e), B^c(e)\in\s_e, e\in E\}$. We obtain that $A(e)\cap B(e)=\emptyset$. Since $\s_e$ is a normal space for each $e\in E$, there exist open sets $G(e),H(e)$ such that $A(e)\subseteq G(e)$, $B(e)\subseteq H(e)$ and $G(e)\cap H(e)=\emptyset$. Set $(G,E)=\{(e,G(e)): e\in E\}$ and $(H,E)=\{(e,H(e)): e\in E\}$. Then $(G,E), (H,E)\in\T(\bs)$ such that $(A,E)\widetilde{\subseteq}(G,E)$ and $(B,E)\widetilde{\subseteq}(H,E)$. Furthermore, $(G,E)\widetilde{\bigcap}(H,E)=\{(e,G(e)\cap H(e)): e\in E\}=\widetilde{\Phi}$. This shows that $\T(\bs)$ is soft normal.

Conversely, for each $e\in E$, we let $C(e), D(e)$ be disjoint closed sets in $\s_e$. By Lemma \ref{closed=softclosed}, $(C,E)=\{(e,C(e)):C^c(e)\in\s_e, e\in E\}$ and $(D,E)=\{(e,D(e)):D^c(e)\in\s_e, e\in E\}$ are soft closed sets in $\T(\bs)$ and  $(C,E)\widetilde{\bigcap}(D,E)=\{(e,C(e)\cap D(e)): e\in E\}=\widetilde{\Phi}$. Since $\T(\bs)$ is soft normal, then there exist disjoint soft open sets $(U,E), (V,E)$ such that $(C,E)\widetilde{\subseteq}(U,E)$ and $(D,E)\widetilde{\subseteq}(V,E)$. This implies that  $C(e)\subseteq U(e)$, $D(e)\subseteq V(e)$ and $U(e)\cap V(e)=\emptyset$ for each $e\in E$. Thus, $\s_e$ is normal for each $e\in E$.
\end{proof}

We shall remark that if $(X,\s,E)$ is a soft normal space, then $\s_e$ need not be a normal space for each $e\in E$.
\begin{exam}
Let $X=\{x_1,x_2,x_3\}$ and $E=\{e_1,e_2\}$. Suppose $$\s=\big\{\widetilde{\Phi},(H_1,E),(H_2,E), \cdots,(H_9,E),\widetilde{X}\big\},$$
where
\vspace{-3mm}
\begin{flalign*}
	(H_1,E)&=\{(e_1,\emptyset), (e_2,\{x_3\})\},&&\\
	(H_2,E)&=\{(e_1,\{x_3\}), (e_2,\emptyset)\},&&\\
	(H_3,E)&=\{(e_1,\{x_2,x_3\}), (e_2,\emptyset)\},&&\\
	(H_4,E)&=\{(e_1,X), (e_2, \emptyset)\},&&\\
	(H_5,E)&=\{(e_1,\{x_1,x_3\}), (e_2, \emptyset)\},&&\\
	(H_6,E)&=\{(e_1,\{x_3\}), (e_2, \{x_3\})\},&&\\
	(H_7,E)&=\{(e_1,\{x_2,x_3\}), (e_2, \{x_3\})\},&&\\
	(H_8,E)&=\{(e_1,X), (e_2, \{x_3\})\}, and&&\\
	(H_9,E)&=\{(e_1,\{x_1,x_3\}), (e_2, \{x_3\})\}.&&
\end{flalign*} 
Then $\s$ is a soft normal space as each pair of non-null soft closed sets intersects each other. On the other hand, $\s_{e_1}$ is not normal. 
\end{exam}

The following example demonstrates that if one $\s_e$ is not normal, then $\T(\bs)$ needs not be soft normal:
\begin{exam}\label{milan2}
Let $X=\{x_1,x_2,x_3\}$ and $E=\{e_1,e_2\}$. Take $\s_{e_1}=\{\emptyset, X\}$ and $\s_{e_2}=\{\emptyset, \{x_1\}, \{x_1, x_2\}, \{x_1, x_3\}, X\}$. One can easily verify that $\s_{e_1}$ is normal but not $\s_{e_2}$. The closed sets $\{x_2\}, \{x_3\}$ in $\s_{e_2}$ cannot be separated. The soft topology $$\T(\bs)=\big\{\widetilde{\Phi}, (F_1,E), (F_2,E), (F_3,E), (F_4,E), (F_5,E), (F_6,E), (F_7,E), (F_8,E), \widetilde{X}\big\},$$ 
where 
\vspace{-3mm}
\begin{flalign*}
(F_1,E)&=\{(e_1,\emptyset), (e_2,\{x_1\})\},&&\\
(F_2,E)&=\{(e_1,\emptyset), (e_2,\{x_1, x_2\})\},&&\\
(F_3,E)&=\{(e_1,\emptyset), (e_2,\{x_1, x_3\})\},&&\\
(F_4,E)&=\{(e_1,\emptyset), (e_2, X)\},&&\\
(F_5,E)&=\{(e_1,X), (e_2, \emptyset)\},&&\\
(F_6,E)&=\{(e_1,X), (e_2, \{x_1\})\},&&\\
(F_7,E)&=\{(e_1,X), (e_2, \{x_1, x_2\})\},\ and&&\\
(F_8,E)&=\{(e_1,X), (e_2, \{x_1, x_3\})\},&&
\end{flalign*} 
is not a soft normal space. Indeed, since $\{(e_1, \emptyset), (e_2,\{x_3\})\}$ and  $\{(e_1, X), (e_2,\{x_2\})\}$ are disjoint soft closed sets in $\T(\bs)$ but no soft open sets can separate them. 
\end{exam}

\begin{rem}
Notice that we can provide a more general proof sketch that proves the above claim, which is another proof to the part one of Theorem \ref{thm5}. If $\bs=\{\s_e:e\in E\}$ is not a normal topology for some $\bar{e}\in E$, then there are closed sets $A ,B$ in $\s_{\bar{e}}$ which cannot be separated by any open sets. The soft sets $(F^{\bar{e}}_A, E), (F_{\bar{e}}^B, E)$ are closed and disjoint in $\T(\bs)$. If there exist soft open sets $(G,E), (H,E)$ in $\T(\bs)$ such that $(F^{\bar{e}}_A, E)\widetilde{\subseteq}(G,E)$, $(F_{\bar{e}}^B, E)\widetilde{\subseteq}(H,E)$ such that $(G,E)\widetilde{\bigcap}(H,E)=\widetilde{\Phi}$. This concludes that $A\subseteq G(\bar{e})$, $B\subseteq H(\bar{e})$ and $G(\bar{e})\cap H(\bar{e})=\emptyset$, a contradiction. 

\end{rem}

\section*{Conclusion}
This study develops a methodical understanding of the connections between a system of crisp topologies and the soft topology produced by it. The procedure is carried out with the help of two formulas. If we start from an original soft topology, then it produces a system of crisp topologies. With our formulas, we can generate two different new soft topologies. We have discussed the relationships between these soft topologies. Moreover, we show that the resulting soft topology via Formula 1 is always finer than the original one, while the soft topology generated by Formula 2 is incomparable. We see that two different original soft topologies may generate a single soft topology by either of the formulas. Furthermore, we study the
 preservation of separation axioms between the system of crisp topologies and the soft topology generated by it. More precisely, we show that Hausdorffness and normality behave better in transforming to soft topologies and conversely. On the other hand, other separation axioms act differently. If one of the crisp topologies is respectively $T_0$, $T_1$, then it guarantees that the resulting soft soft topology is soft $T_0$, $T_1$. The converse is also true for $T_0$. All of the crisp topologies are regular when the soft topology generated by them is soft regular.
\section*{Acknowledgment}
We would like to thank Milan Matejdes for Examples \ref{ex1}\&\ref{milan2}.

\section*{Ethical approval}
This article does not contain any studies with human participants or animals performed by the author.

\section*{Funding details}
This article received no external funding.

\section*{Conflict of interest}
The author declare no conflict of interest.

\section*{Availability of data and materials}
Data sharing is not applicable to this article as no data sets were generated or analyzed during the current study.

\section*{Authorship contribution}
The authors dealt with the conceptualization, formal analysis, supervision, methodology, investigation, and writing original draft preparation. They also contributed equally in the formal analysis; writing, review and editing. They read and approved the final version of the article.

\end{document}